\def\Z{\mathbb Z}
\def\C{\mathbb C}
\def\bP{\mathbb P}
\def\L{\mathcal L}
\def\H{\mathcal H}
\def\M{\mathcal M}
\def\N{\mathcal N}
\def\P{\mathcal P}
\def\X{\mathcal X}
\def\Y{\mathcal Y}
\def\l{\lambda}
\def\s{\sigma}
\def\a{\alpha}
\def\g{\gamma}
\def\d{\delta}
\def\f{\Phi}
\def\nf{\Psi}
\def\df{\Upsilon}
\def\p{\mathfrak p}
\def\iso{\cong}
\def\o{\otimes}
\def\emb{\hookrightarrow}
\def\Aut{\mathrm{Aut}}
\def\bAut{\overline{\mathrm{Aut}}}
\def\G{\overline G}
\def\<{\langle}
\def\>{\rangle}
\newenvironment{mat2}{\left(\begin{array}{cc}}{\end{array}\right)}
\journalname{Applicable Algebra in Engineering, Communication and Computing}
\begin{document}

\title{Hyperelliptic curves with reduced automorphism group $A_5$
%\thanks{The first author thanks the Spanish Ministry of Education for the grant
%support in Project BFM2001-1294. }
}

%\subtitle{Do you have a subtitle?\\ If so, write it here}

%\titlerunning{Short form of title}        % if too long for running head

\author{David Sevilla \and Tanush Shaska       %  \and         Second Author %etc.
}

%\authorrunning{Sevilla, Shaska} % if too long for running head

\institute{ D. Sevilla \at
Dpt. of Comp. Sci. and Software Eng., Concordia University\\
1455 de Maisonneuve W., Montreal QC, H3G 1M8 Canada\\
\email{sevillad@gmail.com}
           \and
T. Shaska \at
Department of Mathematics and Statistics, Oakland University\\
Rochester, Michigan 48309-4485, U.S.A.\\
\email{shaska@oakland.edu}           %  \\
%             \emph{Present address:} of F. Author  %  if needed
 %          S. Author \at
 %             second address
}

\date{Received: date / Revised: date}
% The correct dates will be entered by the editor

\maketitle

\begin{abstract}
We study genus $g$ hyperelliptic curves with reduced automorphism group $A_5$ and give equations $y^2=f(x)$ for
such curves in both cases where $f(x)$ is a decomposable polynomial in $x^2$ or $x^5$. For any fixed genus the
locus of such curves is a rational variety. We show that for every point in this locus the field of moduli is a
field of definition. Moreover, there exists a rational model $y^2=F(x)$ or $y^2=x F(x)$ of the curve over its
field of moduli where $F(x)$ can be chosen to be decomposable in $x^2$ or $x^5$. While similar equations have been
given in \cite{BCGG} over  $\mathbb R$, this is the first time that these equations are given over the field of
moduli of the curve.

%\keywords{Algebraic curves \and Automorphism groups\and Weierstrass points}
\end{abstract}

\section{Introduction}
%&&&&&&&&&&&&&&&&&&&&&&&&&&&&&&&&&&&&&&&&&&&&&&&&&&&&&&&&&&&&&&&&
Let $\X_g$ denote a genus $g$ hyperelliptic curve defined over an algebraically closed field $k$ of characteristic
zero, $z_0$ its hyperelliptic involution, and $G:=\Aut (\X_g) $ its automorphism group. The group $\G = G/ \<z_0
\>$ is called the \emph{reduced automorphism} group of $\X_g$. We denote by $\H_g$ the moduli space of genus $g$
hyperelliptic curves and by $\L_g^G$ the locus in $\H_g$ of hyperelliptic curves with automorphism group $G$.

In previous works we have focused on the loci $\L_g^G$ of hyperelliptic curves with $V_4$ embedded in the
automorphism group $G$, or when $\G$ is isomorphic to $\Z_n$, or $A_4$; see \cite{GS}, \cite{Sh1}. This paper
continues on the same line of thought as \cite{Sh1} focusing instead on the case when $\G$ is isomorphic to $A_5$.

The second section covers basic facts on automorphism groups of hyperelliptic curves. The group $\G$ is a finite
subgroup of $PGL_2 (\C)$. By a theorem of Klein (see \cite{Kl}), $\G$ is isomorphic to one of the following: $
\Z_n, D_n, A_4, S_4, A_5$. We are interested in the latter case. We give a representation of the group $\G=A_5$ in
$PGL_2(\C)$. The group $A_5$ acts on the genus zero field $k(x)$ via the natural way. The fixed field is a genus 0
field, say $k(z)$. Thus, $z$ is a degree 60 rational function in $x$ which we denote by $z : =\phi(x)$. Using this
representation we compute the fixed field of $A_5$. This rational function $\phi (x)$ (up to a coordinate change)
can be decomposed in $x^2$, $x^3$, or $x^5$. Using computer algebra techniques (i.e, see \cite{Gu}) we compute
such decompositions and use the decomposition in $x^i$, $i=2,3,5$ to compute an equation $y^2 = f(x^i)$ of the
hyperelliptic curves. The equation for $i=2$ makes it possible to compute dihedral invariants of such curves (cf.
section 4).

In section three we determine the ramification signature $\s$ of the cover $\f : \X_g \to \X_g/\Aut(\X_g).$ Using
this ramification structure we are able to show that if $\bAut (X_g)\iso A_5$ then $g \equiv 0, 5, 9, 14, 15, 20,
24, 29 \pmod {30}$. Then the full automorphism group $\Aut (\X_g)$ is isomorphic to $\Z_2 \o A_5$ or $SL_2 (5)$.
Moduli spaces of covers $\f$ are Hurwitz spaces, which we denote by $\H_\s$. There is a map $\Phi_\s: \H_\s \to
\M_g$, where $\M_g$ is the moduli space of genus $g$ algebraic curves. For a fixed $g$ there is only one signature
that occurs for the cover $\f : \X_g \to \X_g/\Aut(\X_g)$. Hence, we denote by $\L_g$ the image $\Phi_\s (\H_\s)$
in the hyperelliptic locus $\H_g$. Given a curve $\X_g$ we would like to determine if it belongs to the locus
$\L_g$ and describe points $\p \in \L_g$. Hence, we need invariants which determine the isomorphism classes of
these curves. In the last part of section three we determine the parametric equations of such curves in all cases
$g \equiv 0, 5, 9, 14, 15, 20, 24, 29 \pmod {30}$. Using the decompositions of $\phi(x)$ we are able to compute
these equations $y^2=f(x)$ where $f(x)$ is a decomposable polynomial in $x^2$, $x^3$, or $x^5$.

In section four we give a brief introduction of the classical invariants of binary forms. Such invariants classify
the orbits of the $SL_2 (k)$-action on the space of binary forms. We use transvections to discover invariants
which give necessary conditions for a curve to have reduced automorphism group isomorphic to $A_5$ or full
automorphism group isomorphic to $\Z_2 \o A_5$ or $SL_2 (5)$. Such conditions appear in the literature for the
first time. Further, we compute the dihedral invariants of such curves and determine the algebraic relations among
them.

In the last section we discuss the field of moduli versus the field of definition for hyperelliptic curves with
reduced automorphism group $A_5$. This is a problem of algebraic geometry that goes back to Weil and Grothendieck.
It follows from \cite{Sh5} or \cite{GS} that for hyperelliptic curves with reduced automorphism group $A_5$ the
field of moduli is a field of definition. However, no rational models of the curve over the field of moduli have
been known. We construct such models for all curves $\X_g$ with $\bAut (\X_g) \iso A_5$. In the last part of the
paper we discuss in more detail the 1-dimensional families for all cases $g \equiv 0, 5, 9, 14, 15, 20, 24, 29
\pmod {30}$. In these cases we prove computationally that for such loci $\L_g$ we have $k(\L_g) = k(\l)$, where
$\l$ is the fourth branch point of the cover $\f: \X_g \to \bP^1$.

There is plenty of literature on the automorphism groups of hyperelliptic curves. Among many papers we mention
\cite{BS}, \cite{BCGG}, \cite{GS}, \cite{GSS}, \cite{Sh1}, \cite{Sh5}. Most of these papers have studied
determining the automorphism groups of the hyperelliptic curve. The main focus of this paper is the locus of
hyperelliptic curves with reduced automorphism group isomorphic to $A_5$ as a subvariety of the hyperelliptic
moduli and the field of definition versus the field of moduli for curves in this locus.\\

\noindent \textbf{Notation:} Throughout this paper $k$ denotes an algebraically closed field of characteristic
zero, $g$ an integer $\geq 2$, and $\X_g$ a hyperelliptic curve of genus $g$. $\M_g$ (resp., $\H_g$) is the moduli
space of curves (resp., hyperelliptic curves) defined over $k$. The symbol $(m)^r$ denotes a permutation which is
conjugate in $S_n$ to an $r$ product of $m$-cycles.

\section{Preliminaries}
%******************************************************************
Let $\X_g$ be a genus $g$ hyperelliptic curve defined over an algebraically closed field $k$ of characteristic
zero. We take the equation of $\X_g$ to be $y^2=F(x)$, where $\deg(F)=2g+2$. Denote the function field of $\X_g$
by $K:=k(x,y)$. We identify the places of $k(x)$ with the points of $\bP^1= k \cup \{\infty\}$ in the natural way.
Then, $K$ is a quadratic extension field of $k(x)$ ramified exactly at $n=2g+2$ places $\a_1, \dots , \a_n$ of
$k(x)$. The corresponding places of $K$ are called the {\it Weierstrass points} of $K$. Let $\P:=\{ \a_1, \dots ,
\a_n \}$. Thus, $K=k(x,y)$, where $y^2=\prod_{\a\in \P} (x-\a)$ and $\a \neq \infty$.

Let $G=\Aut(K/k)$. Since $k(x)$ is the only genus 0 subfield of degree 2 of $K$, then $G$ fixes $k(x)$. Thus,
$\mathrm{Gal}(K/k(x))=\< z_0 \>$, with $z_0^2=1$, is central in $G$. We call the \emph{reduced automorphism group}
of $K$ the group $\G:=G/\< z_0 \>$. Then, $\G$ is naturally isomorphic to the subgroup of $\Aut(k(x)/k)$ induced
by $G$. We have a natural isomorphism $\Gamma:=PGL_2(k) {\overset \iso \to } \Aut(k(x)/k)$. The action of $\Gamma$
on the places of $k(x)$ corresponds under the above identification to the usual action on $\bP^1$ by fractional
linear transformations $t \mapsto \frac {at+b} {ct+d}$. Further, $G$ permutes $\a_1, \dots , \a_n$. This yields an
embedding $\G \emb S_n$.

Because $K$ is the unique degree 2 extension of $k(x)$ ramified exactly at $\a_1,\dots,\a_n$, each automorphism of
$k(x)$ permuting these $n$ places extends to an automorphism of $K$. Thus, $\G$ is the stabilizer in
$\Aut(k(x)/k)$ of the set $\P$. Hence under the isomorphism $\Gamma \mapsto \Aut(k(x)/k)$, $\G$ corresponds to the
stabilizer $\Gamma_\P$ in $\Gamma$ of the $n$-set $\P$.

By a theorem of Klein, $\G$ is isomorphic to one of the following: $\Z_n$, $D_n$, $A_4$, $S_4$ or $A_5$. We are
interested in the latter case. The branching indices of the corresponding cover $\phi: \bP^1 \to \bP^1/ A_5$ are
$2, 5, 3$ respectively; see \cite{Sh1} for details of the general setup. That means that $A_5$ is given as $A_5
\iso \<\s_1, \s_2, \s_3\>$ where $\s_1 \s_2 \s_3=1$ and $\s_1, \s_2, \s_3$ have orders 2, 5, 3. How $\s_1, \s_2,
\s_3$ lift in the extension of $A_5$ will determine $G$. In the next section, we will determine the cover $\phi:
\bP^1 \to \bP^1$ explicitly. The lifting of the elements will determine the group and the equation of the
hyperelliptic curve.

Let $ \s_1= \begin{mat2} w & 1 \\ 1 & -w \\ \end{mat2}$ and
$\s_2 = \begin{mat2} \epsilon^2 & 0 \\ 0 & 1 \\
\end{mat2}$,
where $\omega=\frac{-1+\sqrt{5}}{2}$ and $\epsilon$ is a primitive $5^{th}$ root of unity. Then $\s_1, \s_2$ have
orders 2 and 5 respectively and $\s_3 = ( \s_1 \s_2)^{-1}$ has order 3. This gives an embedding of $A_5$ in $PGL_2
(\C)$ in the following way: $ A_5 \iso \< \s_1, \s_2 \> \emb PGL_2 (k) $. In the next section we will find the
fixed field $L$ of $k(x)$ under the $A_5$ action and study intermediate fields of the extension $k(x)/L$.

The group $A_5$ given above acts on $k(x)$ via the natural way. The fixed field is a genus 0 field, say $k(z)$.
Thus, $z$ is a degree 60 rational function in $x$, say $z=\phi(x)$. In this section we determine $\phi(x)$ and its
decompositions.
\begin{lemma}
Let $H$ be a finite subgroup of $PGL_2(k)$. Let us identify each element of $H$ with the corresponding Moebius
transformation and let $s_i$ be the $i$-th elementary symmetric polynomial in the elements of $H$,
$i=1,\ldots,|H|$. Then any non-constant $s_i$ generates $k(z)$.
\end{lemma}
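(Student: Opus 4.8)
The plan is to prove two things: first, that every $s_i$ lies in the fixed field $k(z)=k(x)^H$, and second, that whenever $s_i$ is non-constant the inclusion $k(s_i)\subseteq k(z)$ is an equality. The organizing device is the polynomial
\[
\Psi(T)=\prod_{h\in H}\bigl(T-h(x)\bigr)=T^n-s_1T^{n-1}+\cdots+(-1)^ns_n,\qquad n=|H|,
\]
whose coefficients are, up to sign, exactly the $s_i$. Since $\mathrm{id}\in H$ gives $h(x)=x$ for $h=\mathrm{id}$, the generator $x$ is one of the roots of $\Psi$, which is what ties $\Psi$ to the extension $k(x)/k(z)$.

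For the first point I would observe that $H$ acts on $k(x)$ with each $h\in H$ sending $x$ to the Moebius transformation $h(x)$, and that this action permutes the list $\{h_1(x),\ldots,h_n(x)\}$: for fixed $\sigma\in H$, applying $\sigma$ sends $h(x)\mapsto h(\sigma(x))=(h\circ\sigma)(x)$, and $h\mapsto h\circ\sigma$ is a bijection of $H$ because $H$ is a group. An elementary symmetric polynomial of a list is invariant under permutations of that list, so $\sigma(s_i)=s_i$ for every $\sigma\in H$ and every $i$; hence $s_i\in k(x)^H=k(z)$.

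For the second point I would count degrees. By Artin's theorem $[k(x):k(z)]=|H|=n$, so in the tower $k(s_i)\subseteq k(z)\subseteq k(x)$ we have $[k(x):k(s_i)]=n\cdot[k(z):k(s_i)]$, where $[k(x):k(s_i)]$ equals the degree of $s_i$ as a rational map $\bP^1\to\bP^1$ (this is where non-constancy is used, to make the extension finite of positive degree). It therefore suffices to show $\deg_x(s_i)\le n$: this forces $[k(z):k(s_i)]\le 1$, and being a positive integer it must equal $1$, giving $k(s_i)=k(z)$. To bound the degree, write $h_j(x)=N_j/D_j$ with $N_j,D_j$ of degree at most $1$, put $s_i$ over the common denominator $\prod_j D_j$, and note that each monomial of the resulting numerator is a product of $n$ of the $N_j$'s and $D_j$'s, hence of degree $\le n$; thus $s_i$ is a quotient of two polynomials of degree $\le n$, and its degree as a map is at most $n$.

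The step I expect to be the only real point of care is this last degree bound, specifically the bookkeeping of degenerate factors: when a transformation fixes $\infty$ the corresponding $D_j$ is constant, and similarly $N_j$ may be constant, so the common-denominator presentation need not be in lowest terms. This does not affect the argument, since $\deg_x(P/Q)\le\max(\deg P,\deg Q)$ regardless of cancellation, but it is the place where one must confirm the bound $\le n$ survives every case. I would also pin down the composition convention for the action at the outset, so that the permutation claim in the invariance step is literally correct; once that is fixed, both halves of the argument are short.
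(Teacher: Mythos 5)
Your proof is correct and is essentially the paper's argument written out in full: the paper observes that the $s_i$ are the coefficients of the minimal polynomial $\prod_{h\in H}\bigl(T-h(x)\bigr)$ of $x$ over $k(z)$ and then cites as well known the Lüroth-type fact that any non-constant coefficient of that polynomial generates the fixed field. Your two steps --- invariance of the $s_i$ via permutation of the roots, and the degree bound $\deg_x(s_i)\le |H|=[k(x):k(z)]$ combined with the tower $k(s_i)\subseteq k(z)\subseteq k(x)$ --- are exactly the standard proof of that cited fact, so the two arguments coincide in substance.
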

\begin{proof}
It is easy to check that the $s_i$ are the coefficients of the minimum polynomial of $x$ over $k(z)$. It is
well-known that any non-constant coefficient of this polynomial generates the field. \qed
\end{proof}

\begin{corollary}
The fixed field of $A_5$ is generated by the function
\[z= -  \frac {\left(x^{20}-228x^{15}+494x^{10}+228x^5+1 \right)^3} {x^5 \, \left(x^{10} + 11x^5-1\right)^5}.\]
\end{corollary}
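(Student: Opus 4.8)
The plan is to show that the displayed function, which I call $z=\phi(x)$, generates the fixed field $k(x)^{A_5}$ by verifying the only two properties a generator of a genus-$0$ invariant field must have: that it is $A_5$-invariant and that it has degree $60$. The embedding $A_5\emb PGL_2(k)$ constructed above is faithful, so the induced action on $k(x)$ is faithful and $k(x)/k(x)^{A_5}$ is Galois with group $A_5$; hence $[k(x):k(x)^{A_5}]=60$ and, by L\"uroth, $k(x)^{A_5}$ is rational. This is precisely the situation made explicit by the Lemma: the $s_i$ are the coefficients of $\prod_{h\in A_5}\bigl(T-h(x)\bigr)$, the minimal polynomial of $x$ over the fixed field, and each non-constant one generates $k(x)^{A_5}$. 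Consequently it suffices to exhibit a single $A_5$-invariant function of degree $60$, for then $k(z)\subseteq k(x)^{A_5}$ together with $[k(x):k(z)]=\deg\phi=60=[k(x):k(x)^{A_5}]$ forces $k(z)=k(x)^{A_5}$.

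First I would identify the numerator and denominator of $z$ structurally. In homogeneous coordinates the denominator base $x(x^{10}+11x^5-1)$ is the classical icosahedral vertex form $f$ of homogeneous degree $12$, whose twelve roots are the two fixed points of $\s_2$ (namely $0$ and $\infty$) together with their $A_5$-orbit, while the numerator base $x^{20}-228x^{15}+494x^{10}+228x^5+1$ is, up to sign, the Hessian $H$ of $f$, the degree-$20$ form cut out by the twenty face centers. Thus $z=-H^3/f^5$. Since the vertex set and the face-center set are disjoint, $f$ and $H$ have no common root, so the binary forms $H^3$ and $f^5$, both of homogeneous degree $60$, are coprime; their ratio therefore defines a morphism $\phi : \bP^1\to\bP^1$ of degree $60$, which is the degree claim.

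It remains to prove $A_5$-invariance, which is the heart of the matter. Because $A_5=\<\s_1,\s_2\>$, it suffices to check that $z$ is fixed by the two generators. For $\s_2$, whose Moebius action is $x\mapsto\epsilon^2x$, every exponent of $x$ occurring in $\phi$ is a multiple of $5$ while $(\epsilon^2)^5=1$, so each monomial is fixed and $z$ is manifestly $\s_2$-invariant. The substantive computation is invariance under $\s_1$, the involution $x\mapsto(\omega x+1)/(x-\omega)$ with $\omega^2+\omega-1=0$. I would carry this out either directly, by substituting and reducing the resulting rational function modulo the relation $\omega^2+\omega-1=0$, or, more conceptually, by showing that $f$ and $H$ are relative invariants of the binary icosahedral group $SL_2(5)$ lifting $\<\s_1,\s_2\>$: under $\s_1$ each is multiplied by $(x-\omega)^{\deg}$ times a root of unity, and in the ratio $H^3/f^5$ the factors $(x-\omega)^{60}$ cancel while the two character factors cancel as well because $f$ and $H$ are genuine $SL_2(5)$-invariants. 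This transformation bookkeeping under $\s_1$ is the step I expect to be the main obstacle; once it is settled, invariance under all of $A_5$ follows, and together with the degree count it shows that $z$ generates $k(x)^{A_5}$, as claimed.
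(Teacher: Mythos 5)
Your proposal is correct in its logical skeleton but reaches the conclusion by a genuinely different route than the paper. The paper's proof is a one-line application of its Lemma 1: it constructs a generator of the fixed field as a non-constant elementary symmetric polynomial $s_i$ of the $60$ Moebius transformations $h(x)$, $h\in A_5$, so invariance and the generating property are automatic by the lemma, and the displayed formula is simply the output of that (computer-algebra) computation. You instead verify the given formula a posteriori: you identify $z=-H^3/f^5$ with the classical Klein icosahedral forms (vertex form $f$ of degree $12$, Hessian $H$ of degree $20$), get $\deg\phi=60$ from coprimality of the two orbit forms, and reduce everything to invariance under the two generators, with $\sigma_2$-invariance immediate from the exponents and $\sigma_1$-invariance handled by the automorphy-factor bookkeeping. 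That bookkeeping is sound, and in fact cleaner than you suggest: since $SL_2(5)$ is perfect it has no nontrivial characters, so $f$ and $H$ are absolute (not merely relative) invariants of the binary icosahedral group, and the factors $(x-\omega)^{60}$ and the scaling needed to normalize the determinant of $\sigma_1$ cancel identically in the ratio $H^3/f^5$. The one step you defer --- checking that the paper's \emph{specific} matrices $\sigma_1,\sigma_2$ actually stabilize this particular icosahedral configuration (equivalently, that $\sigma_1$ fixes $f$ projectively) --- is a finite, routine substitution modulo $\omega^2+\omega-1=0$, of the same computational nature as what the paper hides inside ``apply the theorem.'' In exchange for that check, your argument explains conceptually why the formula is what it is (it is Klein's icosahedral function, essentially the $j$-invariant normalization with branch points $0$, $1728$, $\infty$), whereas the paper's approach explains where the formula comes from algorithmically but gives the reader no independent way to recognize it; your degree-count closing argument also replaces the paper's reliance on Lemma 1 entirely.
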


\begin{proof}
Apply the theorem to the embedding of $A_5$ given above. \qed
\end{proof}

The branch points of $\phi : \bP^1 \to \bP^1 \ $ are 0, 1728 and $\infty$. These correspond respectively to the
elements $\s_1, \s_2, \s_3$ in the monodromy group (cf. Section \ref{Hur}).  At the place $z=1728$ the function
has the following ramification:
\[\phi (x) - 1728= - \frac {\left( x^{30}+522x^{25}-10005x^{20}-10005x^{10}-522x^5+1\right)^2 }
{ x^5 \, \left(x^{10} + 11x^5-1\right)^5}.\]
We denote the following by
\begin{equation*}
\begin{split}
R & = x^{20}-228 x^{15}+494x^{10}+228x^5+1 \\
S & = x \ (x^{10} + 11x^5 -1)\\
T & =  x^{30}+522x^{25}-10005x^{20}-10005x^{10}-522x^5+1.\\
\end{split}
\end{equation*}
As we will see in the next section these functions will be instrumental in determining the equation of the
hyperelliptic curves.

\subsection{Decomposition of $\phi(x)$}\label{subsect22}
%&&&&&&&&&&&&&&&&&&&&&&&&&&&&&&&&&&&&&&&&&&&&&&&&&&&&&&&&&&&&
The automorphism group of $k(x)/k(\phi)$ is the embedding of $A_5$ detailed before. As $|A_5|=[k(x):k(\phi)]$,
there is a degree-preserving correspondence between subgroups of $A_5$ and intermediate fields in the extension.
By L\"uroth's Theorem, each of those fields is $k(h)$ for some rational function $h$. Now, it is clear that, in
general, $k(f)\subset k(h) \Leftrightarrow f=g\circ h\mathrm{\ for\ some\ } g$. Thus, we can use computer algebra
techniques to find all the decompositions of $\phi$ and describe the lattice of intermediate fields.

It is clear from the expression of $\phi$ that there is a decomposition $\phi=g(x^5)$. This comes also from the
fact that the subgroup $\<\epsilon x\>$ of $A_5$ corresponds to the field generated by $x\cdot\epsilon
x\cdot\epsilon^2 x\cdot\epsilon^3 x\cdot\epsilon^4 x=x^5.$

It is also possible to find decompositions involving $x^2$ or $x^3$ for functions that are equivalent to $\phi$.
Namely, for any $\sigma\in PGL_2(k)$, a generator of the field fixed for the conjugate group $\sigma
A_5\sigma^{-1}$ is $\phi(\sigma^{-1})$. If $\sigma$ is chosen in such a way as having $\{x,-x\}<\sigma A_5
\sigma^{-1}$, then $k(x\cdot(-x))=k(x^2)$ will be an intermediate field by Lemma 1. This can be accomplished by
conjugating any involution of $A_5$ into $-x$. In the same manner, if an element of order 3 in $A_5$ is conjugated
into $\zeta_3 x$, where $\zeta_3$ is a primitive cubic root of $1$, the resulting function can be written in terms
of $x\cdot\zeta_3 x\cdot\zeta_3^2 x=x^3.$

We present the former case here, as it will be used later. The element $-1/x\in A_5$ satisfies
$\sigma\circ\frac{-1}{x}\circ\sigma^{-1}=-x,$ where $ \sigma=\frac{ix+1}{-ix+1}.$
Therefore, $\phi_1:=\phi(\sigma^{-1})$ will have $x^2$ as a component. Indeed,
\[\phi_1  =64\,\frac {{\bar R}^3} {\bar S^5}, \quad \phi_1 - 1728 = 256 (i+2) \ \frac {\bar T^2} {\bar S^5}\]
where
\begin{small}
\begin{equation*}
\begin{split}
\bar R = & (25\,{x}^{8}- \left( 210-280\,i \right) {x}^{4}-7-24\,i) \cdot (15\,{x}^{4}+ \left( 10+20\,i \right)
{x}^{2}-9+12\,i) \\
& ( 25\,{x}^{8}+ \left( 300+600\,i \right) {x}^{6}+ \left( 1110-1480\,i \right) {x}^{4}- \left( 660+120\,i
\right){x}^{2}-7-24\,i )\\
\bar S = & (x^2-1) \ (5x^2+3-4i) \ ( 25\,{x}^{8}- ( 100+200\,i ) {x}^{6}+ ( 630-840\,i
) {x}^{4} \\
& + ( 220+40\,i ) {x}^{2}-7-24\,i ) \\
\bar T  = & \, x \cdot (5 x^4+10 x^2+1)\ (x^4+10 x^2+5)\ (125 x^4-150 x^2+200 i x^2-7-24 i) \\
& (5 x^4\!-30 x^2\!+40 i x^2\!-7\!-24 i)\ (5 x^4\!+3\!-4 i)\ (5 x^4\!-10 x^2\!-20 i x^2\!-27\!+36 i)\\
& (45 x^4-10 x^2-20 i x^2-3+4 i).
\end{split}
\end{equation*}
\end{small}

\section{Automorphism groups and the corresponding loci}
%*********************************************************
In this section we determine the automorphism group of $\X_g$ and the ramification structure of the cover $\X_g
\to \X_g/\Aut(\X_g)$. Further, we will discuss the locus of such curves in the variety of moduli.

The automorphism group $G$ of the hyperelliptic curve is a degree 2 central extension of $A_5$.  The following
lemma is proved in \cite{GS}.
\begin{lemma}
Let $p\geq 2$, $\a \in G$ and $\bar \a$ its image in $\G$ with order $| \, {\bar \a} \, |= p $. Then,

i) $| \, \a \, | = p$ if and only if it fixes no Weierstrass points.

ii) $| \, \a \, | = 2p$ if and only if it fixes some Weierstrass point.
\end{lemma}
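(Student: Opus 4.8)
The plan is to first strip away the group-theoretic content with a kernel argument, then reduce the two equivalences to a single geometric question, and finally settle that question by a local analysis at the fixed points of $\bar\a$. Since $\bar\a$ has order $p$ in $\G=G/\<z_0\>$, we have $\a^p\in\<z_0\>=\{1,z_0\}$, and because $z_0$ is central of order $2$ this forces $|\a|\in\{p,2p\}$, with $|\a|=p\iff\a^p=1$ and $|\a|=2p\iff\a^p=z_0$. Thus (i) and (ii) are logically complementary, and it suffices to prove the single equivalence $\a^p=z_0\iff\a$ fixes some Weierstrass point.

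Next I would reduce ``fixing a Weierstrass point'' to a statement about $\bar\a$ acting on $\bP^1$. Recall that the Weierstrass points are exactly the ramification points of the degree-$2$ quotient $\pi\colon\X_g\to\bP^1$ with function field $k(x)$, equivalently the fixed points of $z_0$, and that they lie in bijection with the branch points $\a_1,\dots,\a_n$. As a finite-order element of $PGL_2(k)$, $\bar\a$ has exactly two fixed points on $\bP^1$. If $\a$ fixes a Weierstrass point $P$, then $\bar\a$ fixes $\pi(P)$, which is a branch point; conversely, over a $\bar\a$-fixed branch point there is a single point, which is a Weierstrass point and is therefore necessarily $\a$-fixed. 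Hence $\a$ fixes a Weierstrass point if and only if at least one of the two fixed points of $\bar\a$ is a branch point, and everything is now concentrated on the fibres of $\pi$ over these (at most two) points.

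The engine is a local multiplier computation at a fixed point $Q$ of $\bar\a$. Choosing a uniformizer and using that the action of a finite-order automorphism on the tangent line is faithful in characteristic zero, I would split into two cases. If $Q$ is not a branch point, then $\pi$ is \'etale over $Q$ and a uniformizer downstairs pulls back to a uniformizer at each of the two points above $Q$; should $\a$ fix one of them, its multiplier equals the multiplier $\l$ of $\bar\a$, a primitive $p$-th root of unity, so $\a^p$ acts trivially there and hence $\a^p=1$. If $Q$ is a branch point, then $\pi$ is ramified of index $2$, the downstairs uniformizer is $t^2$ for an upstairs uniformizer $t$, and $z_0$ acts by $t\mapsto -t$, that is by $-1$ on the tangent line. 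The unique point $P$ over $Q$ is $\a$-fixed, its multiplier $\mu$ satisfies $\mu^2=\l$, and therefore $\a^p$ acts by $\mu^p$ with $(\mu^p)^2=\l^p=1$; so $\mu^p=\pm1$, and the value of $\a^p\in\{1,z_0\}$ is read off by comparison with the $-1$ coming from $z_0$.

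The main obstacle is exactly this last comparison: I must show that fixing a Weierstrass point forces $\mu$ to be a primitive $2p$-th root of unity, so that $\mu^p=-1$ and $\a^p=z_0$, rather than a primitive $p$-th root. When $p$ is even the relation $\mu^2=\l$ already pins $\mathrm{ord}(\mu)=2p$, and the equivalence closes at once together with the non-branch-point computation. The delicate case is $p$ odd, where both $\mathrm{ord}(\mu)=p$ and $\mathrm{ord}(\mu)=2p$ are a priori compatible with $\mu^2=\l$; to exclude the spurious order-$p$ possibility I would bring in the global constraint that the branch divisor is reduced, that is, $F$ is separable, and combine it with the fibre analysis over \emph{both} fixed points of $\bar\a$. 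Establishing this parity distinction cleanly, rather than the routine multiplier bookkeeping, is where the real work lies, and it is precisely the point at which the argument departs from the classical involution case $p=2$ of \cite{GS}.
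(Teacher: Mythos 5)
You should first know that the paper does not actually prove this lemma --- its ``proof'' is the single sentence that the result is proved in \cite{GS} --- so your attempt can only be measured against the standard argument. Your opening reduction is correct: $\a^p\in\<z_0\>$ forces $|\a|\in\{p,2p\}$, and (i), (ii) collapse to the single equivalence $\a^p=z_0\iff\a$ fixes a Weierstrass point. Your even-$p$ analysis is also essentially the standard one (over a non-branch fixed point you do not even need multipliers: if $\a$ fixes a non-Weierstrass point $P$ then $\a^p$ fixes $P$ while $z_0$ does not, so $\a^p=1$; you do, however, still need the subcase where $\a$ \emph{swaps} the two-point fibre, settled by the parity of $p$ acting in $S_2$).

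The genuine gap is precisely the spot you flag as ``where the real work lies'': for odd $p$ the order-$p$ possibility at a Weierstrass fixed point is not spurious and cannot be excluded by any global input, because the lemma as literally stated is \emph{false} for odd $p$. Your own reduction shows why: $z_0$ fixes every Weierstrass point, so $\a$ fixes a Weierstrass point if and only if $\a z_0$ does; but for $p$ odd, $(\a z_0)^p=\a^p z_0$ since $z_0$ is central, so exactly one of the two lifts $\a$, $\a z_0$ of $\bar\a$ has order $2p$ --- a purely group-theoretic fact independent of the geometry. Hence whenever some lift of an odd-order $\bar\a$ fixes a Weierstrass point, the order-$p$ lift fixes it as well, contradicting (i). Concretely: on $y^2=x^5-1$ (genus $2$), the map $(x,y)\mapsto(\zeta_5 x,y)$ has order $5=p$ and fixes the Weierstrass point over $x=\infty$; dually, on $y^2=x^6-1$, the map $(x,y)\mapsto(\zeta_3 x,-y)$ has order $6=2p$ and fixes no point at all, contradicting the ``only if'' of (ii). This even occurs inside the present paper: in cases 2 and 3 of Table 1 the fibre $\phi^{-1}(q_2)$ consists of Weierstrass points whose stabilizer is cyclic of order $10$, and the order-$5$ element of that stabilizer fixes them. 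The lemma is correct exactly when $p$ is even (in particular $p=2$, the only case needed to separate $\Z_2\o A_5$ from $SL_2(5)$, since for even $p$ both lifts have the same order); the statement valid for all $p$, and the one the paper actually uses to fill in the signatures of Table 1, concerns stabilizers rather than individual elements: the preimage in $G$ of $\<\bar\a\>$ at a point $P$ over a fixed point of $\bar\a$ is cyclic of order $2p$ if $P$ is a Weierstrass point, and the stabilizer of $P$ has order $p$ otherwise. Your multiplier computation, applied to a generator of that stabilizer, proves exactly this corrected version; as a proof of the lemma as quoted, it cannot be completed for odd $p$.
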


Thus, $\G$ is the monodromy group of a cover $\phi: \bP^1 \to \bP^1$ with signature $(\s_1, \s_2, \s_3)$ as in
section 2; see \S 2 in \cite{Sh1} for further details. We fix the coordinates in $\bP^1$ as $x$ and $z$
respectively and from now on denote the cover $\phi : \bP^1_x \to \bP^1_z$. Thus, $z$ is a rational function in
$x$ of degree $|\G|$. We denote by $q_1, q_2, q_3 $ the corresponding branch points of $\phi$. Let $S$ be the set
of branch points of $\f: \X_g \to \bP^1$. Clearly $q_1, q_2, q_3 \in S$. Let $W$ denote the images in $\bP^1_x$ of
Weierstrass places of $\X_g$ and $V:=\cup_{i=1}^3 \phi^{-1} (q_i)$.

Let $z= \frac {\nf (x) } { \df (x) }$, where $\nf, \df \in k[x]$. For each branch point $q_i$, $i=1,2,3$ we have
the degree $|\G|$ equation $z\cdot \df(x) - q_i \cdot \df (x) =\nf (x),$ where the multiplicity of the roots
correspond to the ramification index for each $q_i$ (i.e., the index of the normalizer in $\G$ of $\s_i$). We
denote the ramification of $\phi: \bP^1_x \to \bP^1_z$, by $\varphi_m^r, \chi_n^s,  \psi_p^t$, where the subscript
denotes the degree of the polynomial.

Let $\l \in S \setminus \{ q_1, q_2, q_3\}$. The points in the fiber of a non-branch point $\l$ are the roots of
the equation: $ \nf (x) -\l \cdot \df (x) =0.$ To determine the equation of the curve we simply need to determine
the Weierstrass points of the curve. For each
fixed $\phi$ there are the following cases:\\

\noindent 1) \,$V \cap W = \emptyset$,  \\
2) \,$V \cap W =\phi^{-1} (q_1)$, \\
3) \,$V \cap W =\phi^{-1} (q_2)$, \\
4) \,$V \cap W =\phi^{-1} (q_3)$, \\
5) \,$V \cap W =\phi^{-1} (q_1) \cup \phi^{-1} (q_2)$, \\
6) \,$V \cap W =\phi^{-1} (q_2) \cup \phi^{-1} (q_3)$, \\
7) \,$V \cap W =\phi^{-1} (q_1) \cup \phi^{-1} (q_3)$, \\
8) \,$V \cap W =\phi^{-1} (q_1) \cup \phi^{-1} (q_2) \cup \phi^{-1} (q_3)$.\\

From the above lemma we have that if the places in the fiber $\phi^{-1} (q_1)$, $\phi^{-1} (q_2)$, $\phi^{-1}
(q_3)$, are Weierstrass points then $\s_1, \s_2, \s_3$ lift in $G$ to elements of order 4, 6, and 10 respectively.
The first four cases give the group $\Z_2 \o A_5$ and the other four cases give the group $SL_2 (5)$. We have the
following table. The column containing the dimension $\d$ of the corresponding spaces will be explained in the
next subsection.
\begin{table}[h!] \label{table1}
      \caption{All possible signatures when the reduced automorphism group is $A_5$}
%
%      \vskip 0.2cm
      \begin{center}
      \renewcommand{\arraystretch}{1.24}
      \begin{tabular}{||c|c|c|c|c|c||}
      \hline
      \hline
$\#$ & $G$ & $\G$ &  $\d$  &  $\f: \X_g \to \bP^1$  & $ \phi: \bP^1 \to \bP^1$    \\
      \hline       \hline
1 & $\Z_2\o A_5$ & & $\frac {g+1} {30}$& $(3^{40}, 5^{24}, 2^{60}, \dots , 2^{60})$&   \\
2 & $\Z_2\o A_5$ & & $\frac {g-5} {30}$& $(3^{40}, 10^{12}, 2^{60}, \dots , 2^{60})$&      \\
3 & $\Z_2\o A_5$ & & $\frac {g-15} {30}$& $(6^{20}, 10^{12}, 2^{60}, \dots , 2^{60})$&  \\
4 & $\Z_2\o A_5$ & $A_5$ & $\frac {g-9} {30}$& $(6^{20}, 5^{24}, 2^{60}, \dots , 2^{60})$&
$( 2^{30}, 3^{20}, 5^{12} )$   \\
5 & $SL_2(5)$& &$\frac {g-14} {30}$&$(4^{30}, 3^{40}, 5^{24}, 2^{60}, \dots , 2^{60})$&   \\
6 & $SL_2(5)$&  &$\frac {g-20} {30}$&$(4^{30}, 3^{40}, 10^{12}, 2^{60}, \dots , 2^{60})$&    \\
7 & $SL_2(5)$&  &$\frac {g-24} {30}$ &$(4^{30}, 6^{20}, 5^{24}, 2^{60}, \dots , 2^{60})$&   \\
8 &$SL_2(5)$&  &$\frac {g-30} {30}$ &$(4^{30}, 6^{20}, 10^{12}, 2^{60}, \dots , 2^{60})$&   \\
      \hline
      \hline
     \end{tabular}
     \end{center}
\end{table}
In the Table we give the ramification structure of $\f : \X_g \to \bP^1$. The tuple $(\s_1,\dots,\s_r)$
corresponding to this signature is such that $G \iso \< \s_1, \dots , \s_r\>$ and $\s_1 \cdots \s_r =1$. We call
this tuple $( \s_1, \dots, \s_r ) $ the \emph{signature tuple} of the covering (cf. Section~\ref{Hur} for
details).
\begin{corollary}
Let $\X_g$ be a genus $g\geq 2$ hyperelliptic curve with reduced automorphism group isomorphic to $A_5$. If $g$ is
odd then $\Aut (\X_g) \iso \Z_2 \o A_5$, otherwise $\Aut (\X_g) \iso SL_2 (5)$.
\end{corollary}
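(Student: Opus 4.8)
The plan is to read the conclusion off the ramification data assembled in Table~1, but to make the parity dichotomy transparent I would first isolate the single arithmetic feature that separates the two candidate groups. Since $G$ is a central extension of $\G \iso A_5$ by $\< z_0 \> \iso \Z_2$ and the Schur multiplier of $A_5$ is $\Z_2$, there are exactly two possibilities, $G \iso \Z_2 \o A_5$ and $G \iso SL_2(5)$, and these are distinguished purely group-theoretically: in $\Z_2 \o A_5$ an involution of $A_5$ has a lift of order $2$, whereas in $SL_2(5)$ the element $-I$ is the unique involution, so every lift of an involution of $A_5 = PSL_2(5)$ has order $4$. Thus I only need to decide, for a given $\X_g$, whether a lift of $\s_1$ (the order-$2$ generator) has order $2$ or $4$.

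First I would translate this into geometry via the preceding Lemma: a lift $\a$ of $\s_1$ has order $2$ iff it fixes no Weierstrass point, and order $4$ iff it fixes one. The fixed points of the involutions of $A_5$ acting on $\bP^1_x$ are exactly the $30$ points of the fibre $\phi^{-1}(q_1)$, and they form a single $A_5$-orbit; since the Weierstrass locus is $A_5$-invariant, either all of $\phi^{-1}(q_1)$ consists of Weierstrass points or none of it does. Hence $G \iso SL_2(5)$ precisely when $\phi^{-1}(q_1) \subseteq W$, and $G \iso \Z_2 \o A_5$ otherwise.

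The second step is a congruence count of the $2g+2$ Weierstrass points. Their images in $\bP^1_x$ form a union of $A_5$-orbits; a generic orbit has size $60$, while the only exceptional orbits are $\phi^{-1}(q_1),\phi^{-1}(q_2),\phi^{-1}(q_3)$, of sizes $30,12,20$ (the fixed loci of the elements of order $2,5,3$). Writing $\e_i\in\{0,1\}$ for whether $\phi^{-1}(q_i)\subseteq W$, this gives
\[ 2g+2 = 30\,\e_1 + 12\,\e_2 + 20\,\e_3 + 60\,c, \qquad c \geq 0. \]
Reducing modulo $4$ and using $12\equiv 20\equiv 60\equiv 0$ and $30\equiv 2 \pmod 4$, every term except the first dies, so $2g+2 \equiv 2\,\e_1 \pmod 4$, i.e.\ $g+1 \equiv \e_1 \pmod 2$. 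Therefore $g$ is even exactly when $\e_1=1$, which by the first step is exactly when $G \iso SL_2(5)$; and $g$ is odd exactly when $\e_1=0$, i.e.\ when $G \iso \Z_2 \o A_5$. This is the assertion.

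I do not expect a serious obstacle once the table is granted: the content lies entirely in the ramification and orbit data already established, and the remaining argument is just the observation that only the order-$2$ fibre $\phi^{-1}(q_1)$, of odd size $30$, can alter $2g+2$ modulo $4$. The one point deserving care is the claim that $\phi^{-1}(q_1)$ is a single orbit on which being a Weierstrass point is all-or-nothing; this is where the $A_5$-invariance of the Weierstrass set (equivalently, that $\G$ acts on $\X_g$ permuting the branch points) is essential, and it is precisely what lets me collapse the eight rows of Table~1 into the single invariant $\e_1$.
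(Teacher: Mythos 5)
Your proposal is correct, and it rests on the same two ingredients the paper uses to build Table~1: the lemma on lifts of automorphisms (order $p$ versus $2p$ according to fixed Weierstrass points) and the orbit structure of $A_5$ on $\bP^1$ (one orbit of each size $30$, $20$, $12$, plus generic orbits of size $60$). The difference is in organization. The paper never argues the corollary directly: it enumerates the eight possibilities for $V\cap W$, computes the genus in each case by counting Weierstrass points (this is what produces the dimension column $\d$ and the congruences $g\equiv 29,5,15,9 \pmod{30}$ for $\Z_2\o A_5$ and $g\equiv 14,20,24,0\pmod{30}$ for $SL_2(5)$), and the corollary is then read off Table~1 by inspecting which residue classes are odd and which are even. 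You instead collapse the eight cases into the single bit $\e_1$ (is the order-$2$ fibre contained in $W$?) and reduce modulo $4$ rather than modulo $30$; your criterion ``$SL_2(5)$ iff $\e_1=1$'' is exactly what separates the table's rows 5--8 (signatures containing $4^{30}$) from rows 1--4. You also supply the group theory that the paper leaves implicit, namely that the Schur multiplier of $A_5$ is $\Z_2$, so the only two central extensions are $\Z_2\o A_5$ and $SL_2(5)$, distinguished by whether involutions lift to order $2$ or $4$. What your route buys is a uniform, table-free proof making transparent that only the size-$30$ orbit can affect the parity of $2g+2$; what the paper's route buys is the finer mod-$30$ information and the dimensions $\d$, which it needs for the rest of the paper in any case. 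One caution if you compare against the text: the list of eight cases preceding Table~1 and the table's rows are not in the same order (the list's case $V\cap W=\phi^{-1}(q_1)$ is the table's row 5), so the sentence ``the first four cases give $\Z_2\o A_5$'' must be read against the table, exactly as your criterion predicts.
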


\subsection{Hurwitz spaces}\label{Hur}
%&&&&&&&&&&&&&&&&&&&&&&&&&&&&&&&&&&&&&&&&&&&&&

In this section we  give a brief introduction to Hurwitz spaces. For further details the reader can check \cite{H}
among many other authors. Let $X$ be a curve of genus $g$ and $f: X \to \bP^1$ be a covering of degree $n$ with
$r$ branch points. We denote the branch points by $q_1,\ldots,q_r\in\bP^1$ and let $p\in
\bP^1\setminus\{q_1,\ldots,q_r\}$. Choose loops $\gamma_i$ around $q_i$ such that
\[\Gamma:=\pi_1 (\bP^1\setminus\{ q_1, \dots , q_r\},\ p)=\< \gamma_1, \ldots , \gamma_r\>,
\quad \gamma_1 \cdots \gamma_r=1.\]
$\Gamma$ acts on the fiber $f^{-1}(p)$ by path lifting, inducing a transitive subgroup $G$ of the symmetric group
$S_n$ (determined by $f$ up to conjugacy in $S_n$). It is called the \emph{monodromy group} of $f$. The images of
$\gamma_1,\ldots,\gamma_r$ in $S_n$ form a tuple of permutations $\s=(\s_1,\ldots,\s_r)$ called a tuple of
\emph{branch cycles} of $f$. We call such a tuple the \emph{signature} of $\phi$. The covering $f:X\to\bP^1$ is of
type $\s$ if it has $\s$ as tuple of branch cycles relative to some homotopy basis of $\bP^1\setminus \{ q_1,
\dots , q_r\}$.

Two coverings $f:X\to\bP^1$ and $f':X'\to\bP^1$ are \emph{weakly equivalent} (resp. \emph{equivalent}) if there is
a homeomorphism $h:X\to X'$ and an analytic automorphism $g$ of $\bP^1$ such that $g\circ f=f'\circ h$ (resp.,
$g=1$). Such classes are denoted by $[f]_w$ (resp., $[f]$). The \emph{Hurwitz space} $\H_\s$ (resp.,
\emph{symmetrized Hurwitz space $\H_\s^s$}) is the set of weak equivalence classes (resp., equivalence) of covers
of type $\s$, it carries a natural structure of an quasiprojective variety.

Let $C_i$ denote the conjugacy class of $\s_i$ in $G$ and $C=(C_1, \dots , C_r)$. The set of Nielsen classes
$\N(G, C)$ is
\[\N(G,\s):=\{(\s_1,\dots,\s_r)\ |\ \s_i\in C_i,\,G=\<\s_1,\dots,\s_r\>,\ \s_1\cdots\s_r=1\}\]
The braid group acts on $\N(G, C)$ as
\[[\g_i]: \quad (\s_1, \dots , \s_r) \to (\s_1, \, \dots , \, \s_{i-1}, \s_{i+1}^{ \s_i}, \s_i, \s_{i+2}, \dots, \s_r)\]
where $\s_{i+1}^{ \s_i}= \s_i \s_{i+1} \s_i^{-1}$. We have $\H\sigma=\H_\tau$ if and only if the tuples $\s$,
$\tau$ are in the same \emph{braid orbit} $\mathcal O_\tau = \mathcal O_\sigma$.

Let $\M_g$ be the moduli space of genus $g$ curves. We have morphisms $ \H_\s \overset {\Phi_\s} \longrightarrow
\H_\s^s \overset { \bar{\Phi}_\s }  \longrightarrow \M_g$ where $ [f]_w \to [f] \to [X]$.
Each component of $\H_\s$ has the same image in $\M_g$. We denote by $\L_g :={\bar \Phi}_\s (\H_\s^s).$ This
causes no confusion since for a fixed $g$ we are in one of the cases of Table 1.

Next, we see how this applies to our particular situation. The family of covers $\f: \X_g \to \bP^1$ as in Table
1, have monodromy group $\Z_2 \o A_5$ or $SL_2(5)$. We denote the set of branch points of $f$ by $S:=\{ q_1, \dots
, q_r \}$. The branch cycle description of $f\,$ is $(\s_1, \ldots, \s_r)$ as in Table 1. Since we have at least
$r-3$ branch points which have the same ramification then there is an action of $S_r$ permuting these branch
points (i.e., which correspond to the ramification type $(2)^{60}$). Notice that in case 1 there is an action of
$S_{r+1}$ on the set of branch points. The symmetrized Hurwitz space is birationally isomorphic to the locus of
hyperelliptic curves in hyperelliptic moduli $\H_g$ with reduced automorphism group $A_5$. It will be our goal to
determine this locus for any $g$. We summarize the results of this section in the next lemma.

\begin{lemma} Let $\X_g$ be a genus $g\geq 2$ hyperelliptic curve with reduced automorphism group isomorphic
to $A_5$ and $\L_g$ denote the locus of such curves in the hyperelliptic moduli $\H_g$. Then, $G:=\Aut(\X_g)$ and
the signature $\s$ of the covering $\f : \X_g \to \bP^1$ are given in Table 1. Further, each locus $\L_g$ is
$\d$-dimensional irreducible subvariety of the hyperelliptic moduli $\H_g$.
\end{lemma}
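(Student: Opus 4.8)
The plan is to read off $G$ and the signature $\s$ from the ramification of the composite cover $\X_g\to\bP^1_x\to\bP^1_z$, and then to realise $\L_g$ as the birational image of an explicit configuration space of branch points. For the group and signature: the reduced group $\G\iso A_5$ acts on $\bP^1_x$ as the degree-$60$ cover $\phi$ of Section~2, whose three branch points $q_1,q_2,q_3$ carry the ramifications $2^{30},5^{12},3^{20}$ coming from $\s_1,\s_2,\s_3$ of orders $2,5,3$. Since $G$ permutes the Weierstrass points, the Weierstrass locus is an $A_5$-invariant subset of $\bP^1_x$, hence a union of $A_5$-orbits: some of the three special orbits $\phi^{-1}(q_1),\phi^{-1}(q_2),\phi^{-1}(q_3)$ (of sizes $30,12,20$) together with $m$ generic orbits $\phi^{-1}(\l_1),\dots,\phi^{-1}(\l_m)$ of size $60$. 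These are exactly the eight cases for $V\cap W$ listed before Table~1. By the lifting lemma above (proved in \cite{GS}) the fibre over $q_i$ lies in $W$ precisely when $\s_i$ lifts to order $2|\s_i|$; in particular $\s_1$ lifts to order $4$ exactly when $\phi^{-1}(q_1)\subset W$, which is what separates $G\iso SL_2(5)$ from $G\iso\Z_2\o A_5$. Composing each ramification of $\phi$ with the hyperelliptic double cover---which doubles the number of points over a non-Weierstrass fibre and doubles the index over a Weierstrass one---sends $e_i^{n_i}$ to $e_i^{2n_i}$ or to $(2e_i)^{n_i}$, producing the symbols $3^{40},5^{24},6^{20},10^{12},4^{30}$ and a $2^{60}$ for each generic fibre. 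This reproduces the signature $\s$ recorded in Table~1.

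Next I would compute $\d$. Normalising $\phi$ so that $q_1,q_2,q_3$ lie at $0,1728,\infty$, a curve in $\L_g$ is determined by the unordered set $\{\l_1,\dots,\l_m\}\subset\bP^1_z\setminus\{q_1,q_2,q_3\}$ of its remaining branch points. Counting Weierstrass points gives $\nu+60m=2g+2$, where $\nu\in\{0,12,20,30,32,42,50,62\}$ is the number of points contributed by the special fibres present in the given case; solving this linear relation returns in each of the eight cases precisely the value $\d$ of Table~1, so $\d=m$. Since a M\"obius transformation fixing the three distinct points $q_1,q_2,q_3$ is the identity, no continuous parameters survive beyond the $\l_j$, whence $\dim\L_g=m=\d$.

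Finally, for irreducibility I would realise $\L_g$ as a rational image. The rule $\{\l_1,\dots,\l_m\}\mapsto\X_g$, where $\X_g$ has Weierstrass locus $\bigcup_{q_i\in\text{case}}\phi^{-1}(q_i)\cup\bigcup_j\phi^{-1}(\l_j)$, is a morphism from the configuration space $\mathcal C$ of $m$-element subsets of $\bP^1_z\setminus\{q_1,q_2,q_3\}$---an open subvariety of $\mathrm{Sym}^m(\bP^1\setminus\{q_1,q_2,q_3\})$, hence rational and irreducible---onto a dense subset of $\L_g=\bar\f_\s(\H_\s^s)$. It is dominant because every curve with $\bAut(\X_g)\iso A_5$ has such a branch locus. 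It is injective because an isomorphism between two members descends to a M\"obius transformation of $\bP^1_x$ carrying one $A_5$-invariant Weierstrass locus to the other, hence normalising the fixed copy of $A_5$; as $A_5$ is self-normalising in $PGL_2(k)$ this transformation lies in $A_5$, fixes every point of $\bP^1_z=\bP^1_x/A_5$, and therefore fixes the set $\{\l_1,\dots,\l_m\}$. Thus $\mathcal C$ maps birationally onto $\L_g$, which is accordingly irreducible---indeed rational---of dimension $m=\d$. I expect the injectivity step to be the main obstacle: it rests on the self-normalising property of the icosahedral group in $PGL_2(k)$ and on verifying dominance, and it is exactly what forces the several braid components of $\H_\s$ to share the single image $\L_g$, so that irreducibility and dimension descend from $\mathcal C$.
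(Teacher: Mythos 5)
Your proof is correct, and it takes a genuinely different route from the paper's. The paper's own proof is a three-line appeal to Hurwitz-space theory: the moduli dimension of a family of covers with $r$ branch points is $\d = r-3$, the extra branch points all have ramification type $(2)^n$, and the Riemann--Hurwitz formula determines $r$ in each case; irreducibility is inherited from the fact, stated earlier in the section, that every braid-orbit component of $\H_\s$ has the same image in $\M_g$, so $\L_g = \bar\Phi_\s(\H_\s^s)$ is the image of an irreducible variety. Your signature and group determination coincides with the paper's pre-table derivation (composing the ramification of $\phi$ with the hyperelliptic double cover, and using the lifting lemma from \cite{GS} to decide $\Z_2 \o A_5$ versus $SL_2(5)$), and your Weierstrass-point count $\nu + 60m = 2g+2$ is an elementary substitute for the Riemann--Hurwitz step, giving the same $\d$. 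Where you genuinely diverge is the last part: instead of quoting the Hurwitz machinery, you exhibit $\L_g$ as the birational image of $\mathrm{Sym}^m(\bP^1\setminus\{q_1,q_2,q_3\})$, with injectivity resting on the fact that $A_5$ is self-normalizing in $PGL_2(k)$ (true: its centralizer is trivial since the representation is irreducible, so the normalizer is a finite subgroup containing $A_5$, hence equal to $A_5$ by Klein's classification). This buys more than the paper's proof does at this point --- it gives rationality of $\L_g$ directly, a fact the paper only obtains later by citing \cite{GS} and the dihedral invariants --- at the cost of having to verify dominance and injectivity by hand rather than borrowing them from the general theory of Hurwitz spaces.
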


\begin{proof} The moduli dimension of these families of covers
is $\d = r-3$, where $r$ is the number of branch points of the cover $\f: \X_g \to \bP^1$. The ramification of
each branch point $q\in S \setminus \{ q_1, q_2, q_3\}$ is of the type $(2)^n$. The Hurwitz-Riemann formula
determines the number of branch points in each case. \qed
\end{proof}

\subsection{Parametrization of families}
%*************************************************
In this section we state the equations of curves in each case of Table 1. Continuing with the notation of section
4.1 we have $W \subset \bigcup_{\l \in S \setminus \{ q_1, q_2, q_3\} } \phi^{-1} (\l).$
Thus the places of $W$ are roots of the polynomial
\[\Lambda(x) := \prod_{\l \in S \setminus \{ q_1, q_2, q_3\}} \left( \nf (x) -\l \cdot \df (x)    \right).\]
Then, the equation of the curve for all cases 1-8 is $y^2=f(x)$ where $f(x)$ is respectively
\begin{equation}
\label{norm_eq} \Lambda,\ \varphi \cdot \Lambda, \ \chi\cdot \Lambda,\ \psi\cdot \Lambda,\ \varphi \cdot \chi\cdot
\Lambda,\ \chi\cdot \psi\cdot \Lambda,\ \varphi \cdot \psi\cdot \Lambda,\ \varphi \cdot \chi \cdot \psi\cdot
\Lambda.
\end{equation}
Since we know $z = \frac {\nf (x) } { \df (x) }$ in each case, then it is an elementary exercise to compute the
equation of the curve for all cases of Table 1. In our case we can apply the above when $z=\phi (x)$ or $z=\phi_1
(x)$. In the first case we have
\begin{small}
\begin{equation*}
\begin{split}\scriptstyle
& \Lambda_i (x) = -x^{60}+(684 - \l_i) x^{55}-(55 \l_i + 157434) x^{50}-(1205 \l_i-12527460 ) x^{45} \\
& -(13090\l_i\!+\!77460495)x^{40}\!+\!(130689144\!-\!69585\l_i) x^{35}\!+\!(33211924\!-\!134761\l_i)x^{30} \\
& +(69585\l_i-130689144) x^{25}\!-(13090 \l_i\!+\!77460495)x^{20}\!-(12527460-1205\l_i) x^{15} \\
& -(157434+ 55 \l_i) x^{10}+( \l_i-684) x^5-1
\end{split}
\end{equation*}
\end{small}
Then, $\Lambda(x) = \prod_{i=1}^\d \Lambda_i (x).$ By replacing $\varphi, \chi, \psi$ with $R, S, T$ we determine
the equation of the curve in each case. In the second case we determine $\Lambda (x)$ using $z=\phi_1 (x)$ and
$\bar R, \bar S, \bar T$.

\section{Isomorphism classes of hyperelliptic curves with reduced automorphism group $A_5$}
%**************************************************************************************
In this section we discuss the invariants of hyperelliptic curves with reduced automorphism group $A_5$. Such
invariants are needed to describe the loci $\L^G_g$ and discuss the field of definition of such curves.  We will
consider the coefficients of our curves as variables in order to study the relations among the different function
fields that will be introduced.

To get a description of $\L_g^G$ for each case of Table 1, we need invariants which would classify the isomorphism
classes of hyperelliptic genus $g$ curves. These invariants are generators of the fixed field of $GL_2 (k)$ acting
on the $(d+1)$-dimensional space $V_d$ of binary forms of degree $d$.

We use the symbolic method of classical invariant theory to construct invariants of binary forms. Let $f(X, Y)$
and $g(X, Y)$ be binary forms of degree $n$ and $m$ respectively. We denote by $(f, g)^r$  their
\emph{$r$-transvection}; see  \cite{Sh1} for details. For the rest of this paper $F(X,Y)$ denotes a binary form of
degree $d:=2g+2$. Invariants (resp., covariants) of binary forms are denoted by $I_s$ (resp., $J_s$) where the
subscript $s$ denotes the degree (resp., the order). We define the following covariants and invariants:
\begin{equation*}
\begin{split}
\begin{aligned}
&J_{4j} := (F,F)^{d-2j}, \  j=1,\dots,g, \\
&I_4 := (J_{12}, J_{12})^{12}, \\
&I_6^* := ((F, J_{20})^{20}, (F, J_{20})^{20})^{d-20}.
\end{aligned}
\quad
\begin{aligned}
&I_2 := (F,F)^d, \\
&I_6 := ((F, J_{12})^{12}, (F, J_{12})^{12})^{d-12}, \\ \\
\end{aligned}
\end{split}
\end{equation*}

The $GL_2(k)$-invariants are called \emph{absolute invariants}. We define the following absolute invariants:
\[i_1:=\frac {I_4} {I_2^2}, \quad  i_2:=\frac {I_6} {I_2^3}, \quad i_3= \frac {I_6^*} {I_2^3},\quad i_4= \frac {I_6^2}
{I_4^3}.\]
We will only perform computations on subvarieties $\L_g \subset \H_g$ of dimension $\delta \leq 1$, hence don't
need other absolute invariants. Next we will give necessary conditions on these invariants for the corresponding
curve to have reduced automorphism group $A_5$ and full automorphism group $\Z_2 \o A_5$ or $SL_2(5)$.

\begin{lemma}
Let $\X_g$ be a hyperelliptic curve with genus $g\leq 60$ such that $\bAut(\X_g) \iso A_5$. Then the invariants
$(J_i,J_i)^i$ are zero for $i=4,8,16,28$.
\end{lemma}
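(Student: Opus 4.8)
The plan is to exploit the fact that $(J_i,J_i)^i$ is an order-$0$ covariant of the binary form $F$ of degree $d=2g+2$, hence a $GL_2(k)$-relative invariant whose value changes only by a nonzero power of the determinant under a coordinate change. Its vanishing therefore depends only on the $GL_2(k)$-orbit of $F$, i.e. only on the isomorphism class of $\X_g$. So I may replace $\X_g$ by any convenient model and, in particular, work in the parametrized normal form of Section~3, where the roots of $F$ form a union of orbits of the standard icosahedral $A_5\subset PGL_2(k)$: concretely $F$ is a product of the forms $R,S,T$ of degrees $20,12,30$ and of orbit polynomials $\Lambda_i=-R^3-\l_i S^5$ of degree $60$.

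The first real step is to upgrade the $A_5$-symmetry of the root set to genuine invariance of $F$ under the binary icosahedral group $SL_2(5)\subset SL_2(k)$ lying over $A_5$. For $\gamma\in SL_2(5)$ the zero divisor of $F\circ\gamma$ equals that of $F$, so $F\circ\gamma=c(\gamma)\,F$ for a scalar $c(\gamma)$, and $\gamma\mapsto c(\gamma)$ is a linear character of $SL_2(5)$. Since $SL_2(5)$ is perfect it has no nontrivial character, whence $c\equiv 1$; note also that $-I$ acts trivially because $d=2g+2$ is even. Thus $F$ is an honest $SL_2(5)$-invariant binary form (equivalently each factor $R,S,T,\Lambda_i$ is one, the $\Lambda_i$ being polynomials in $R,S$).

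Next I would use that transvection is $SL_2(k)$-equivariant, $(F\circ\gamma,F\circ\gamma)^r=(F,F)^r\circ\gamma$. Taking $\gamma\in SL_2(5)$ and $F\circ\gamma=F$ shows each $J_{4j}=(F,F)^{d-2j}$ is again $SL_2(5)$-invariant, of order $4j$; the four cases $i=4,8,16,28$ are $j=1,2,4,7$, and since $d-2j$ is even these transvectants are not forced to vanish for formal antisymmetry reasons, so the vanishing is genuinely a consequence of the $A_5$-symmetry. By Klein's determination of the icosahedral invariants (cf. \cite{Kl}), $k[X,Y]^{SL_2(5)}$ is generated by the forms of degrees $12,20,30$ modulo a single relation in degree $60$; in particular its graded pieces in degrees $4,8,16,28$ vanish, since none of these is a nonnegative integer combination of $12,20,30$. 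Hence $J_i=0$ for $i=4,8,16,28$, and a fortiori $(J_i,J_i)^i=0$.

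The main obstacle is the middle step: verifying cleanly that $F$ is a genuine, not merely relative, $SL_2(5)$-invariant, which rests on the normal form of Section~3 together with the perfectness of $SL_2(5)$ and the parity of $d$. Once this is in place the conclusion is immediate from the structure of the invariant ring. I note that the hypothesis $g\le 60$ plays no role in this argument, since the orders $4,8,16,28$ all lie strictly below the first relation degree $60$; the vanishing in fact holds for every $g$ for which $J_{28}$ is defined, i.e. $g\ge 7$. The bound $g\le 60$ is presumably only the range in which the identities are confirmed by a direct symbolic transvectant computation on the explicit form of $F$, which the degree cutoff keeps finite.
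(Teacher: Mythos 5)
Your proof is correct, but it takes a genuinely different route from the paper's. The paper's entire proof is one line: ``In all cases, it can be directly computed that the corresponding $J_i$'s are zero,'' i.e.\ a case-by-case symbolic evaluation of the transvectants $J_{4j}=(F,F)^{d-2j}$ on the explicit parametrized equations of Section 3 --- which is precisely why the hypothesis $g\leq 60$ appears: it bounds the finite range of that computation, exactly as you conjectured at the end. Your argument replaces the computation by structure theory: you first note that vanishing of these covariants is a $GL_2(k)$-invariant condition, pass to the model where $\G$ is the standard icosahedral $A_5\subset PGL_2(k)$, lift to the binary icosahedral group $SL_2(5)\subset SL_2(k)$, and use perfectness of $SL_2(5)$ (together with evenness of $d=2g+2$ to dispose of $-I$) to promote the projective stability of the Weierstrass divisor to honest invariance $F\circ\gamma=F$; then $SL_2$-equivariance of transvection makes each $J_i$ an $SL_2(5)$-invariant form of order $i\in\{4,8,16,28\}$, and Klein's theorem that the icosahedral invariant ring is generated in degrees $12,20,30$ (with one relation in degree $60$) kills them, since none of $4,8,16,28$ is a nonnegative integral combination of $12,20,30$. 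All the steps check: the character argument needs only that the root divisor of $F$ is $A_5$-stable, so you do not even need the factorization into $R$, $S$, $T$, $\Lambda_i$ (good, since the paper's case bookkeeping there is slightly loose); the order count $J_{4j}$ has order $4j$ is right, and your remark that $d-2j$ is even, so there is no trivial antisymmetry vanishing, correctly identifies where the content lies. Note that both arguments actually establish the stronger statement $J_i=0$, not merely $(J_i,J_i)^i=0$. What your approach buys is uniformity and explanation --- the bound $g\leq 60$ becomes unnecessary, the vanishing holding for every admissible genus for which $J_{28}$ is defined --- while the paper's direct computation buys brevity and independence from the classical theory of icosahedral invariants.
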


\begin{proof}
In all cases, it can be directly computed that the corresponding $J_i$'s are zero. \qed
\end{proof}

Let $\X_g$ be a genus $g$ hyperelliptic curve such that $ \bAut (\X_g) \iso A_5$. Then, $\X_g$ is isomorphic to a
curve given by the equation $y^2= F(x^2)  \quad or \quad y^2= x \ F(x^2)$,
with
\[F(x)= x^{d} + a_{d-1} x^{d-1}+ \dots + a_1 x +1,\]
where $d\ = \ g+1 $ or $ g$. Such equation is called the \emph{normal equation} of the curve $\X_g$. The following
\[u_i:=  a_1^{d-i} \, a_i \, + \, a_{d-1}^{d-i} \, a_{d-i}, \quad for \quad 1 \leq i \leq d-1\]
are called \emph{dihedral invariants}. Assume $d= g+1$ (the other case is similar). From the definition of the
invariants we have
\begin{equation*}
\begin{split}
& u_i \ = \  a_1^{g+1-i} a_i + a_g^{g+1-i} a_{g+1-i}, \\
& u_{g+1-i}\ = \ a_1^i a_{g+1-i} + a_g^i a_i,
\end{split}
\end{equation*}
for each $2 \leq i \leq g-1$. Notice that solving this linear system we have $a_i \in k(u_1, \dots , u_g, a_1,
a_g)$, for each $2 \leq i \leq g-1$. For $u_1, u_g$ we have the equation
\begin{equation}\label{quad}
2^{g+1}\, a_g^{2g+2} - 2^{g+1}\, u_1 \, a_g^{g+1} + u_g^{g+1}=0
\end{equation}
which is a quadratic polynomial in $a_g^{g+1}$. It is shown in \cite{GS} that $\L_g$ is a rational variety and
$k(\L_g)=k(u_1, \dots , u_{d-1})$. The next theorem determines a relation between dihedral invariants.
\begin{theorem}\label{disc}
Let $\X_g $ be a genus $g$ hyperelliptic curve with $\bAut(\X_g)\iso A_5$ and $(u_1, \dots , u_g)$ its
corresponding dihedral invariants.  Then

i) If $g$ is odd then $\Aut(\X_g)\iso\Z_2\o A_5$ and $2^{\frac{d-2}2}\,u_1-u_{d-1}^{\frac d 2}=0.$

ii) If $g$ is even then $\Aut(\X_g)\iso SL_2(5)$ and $2^{\frac{d-2}2}\,u_1+u_{d-1}^{\frac d 2}=0.$
\end{theorem}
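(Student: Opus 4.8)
The plan is to reduce the two displayed relations to a single transparent identity about the two boundary coefficients $a_1,a_{d-1}$, and then to extract the crucial sign from the \emph{order of the lift} of an involution, which is already pinned down by the group. First I would record the boundary dihedral invariants straight from the definition, $u_1=a_1^{d}+a_{d-1}^{d}$ and $u_{d-1}=2\,a_1a_{d-1}$. Since $d$ is even in both situations ($d=g+1$ when $g$ is odd, $d=g$ when $g$ is even), the binomial identity gives
\[
2^{\frac{d-2}{2}}u_1 \mp u_{d-1}^{d/2}=2^{\frac{d-2}{2}}\bigl(a_1^{d/2}\mp a_{d-1}^{d/2}\bigr)^2,
\]
where the upper sign is case (i) and the lower sign is case (ii). Hence (i) is equivalent to $a_1^{d/2}=a_{d-1}^{d/2}$ and (ii) to $a_1^{d/2}=-a_{d-1}^{d/2}$. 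The group assertions in (i) and (ii) are exactly the earlier Corollary, so only this coefficient identity remains to be proved.

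Next I would use the Klein four-subgroup of $\bAut(\X_g)\iso A_5$. The normal form $y^2=F(x^2)$ (resp.\ $y^2=xF(x^2)$) corresponds to a coordinate in which some involution $\iota_1\in A_5$ acts as $x\mapsto -x$; this $\iota_1$ lies in a unique $V_4\subset A_5$, namely its centralizer. Let $\iota_2$ be a second involution of that $V_4$. Since $\iota_2$ commutes with $\iota_1$ but is distinct from it, $\iota_2$ must interchange the two fixed points $0,\infty$ of $\iota_1$, so $\iota_2\colon x\mapsto c/x$ for some $c\in k^{*}$. As $\iota_2\in\bAut(\X_g)$, the set of $x$-coordinates of the Weierstrass points is invariant under $x\mapsto c/x$, and a direct computation from $f(x)=F(x^2)$ (resp.\ $xF(x^2)$) yields the functional equation $f(c/x)=\gamma\,x^{-N}f(x)$, with $(\gamma,N)=(1,2d)$ when $g$ is odd and $(\gamma,N)=(c,2d+2)$ when $g$ is even; here one uses the normalization $F(0)=1$, so that the product of the nonzero Weierstrass $x$-coordinates equals $1$. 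Comparing coefficients gives $c^{2d}=1$ and $a_{d-1}=c^{2}a_1$, whence $a_{d-1}^{d/2}=c^{d}a_1^{d/2}$.

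It remains to evaluate $c^{d}\in\{\pm1\}$, and the decisive move is to read this off from the order of the lift rather than from $c$ itself. Writing the lift of $\iota_2$ as $\alpha\colon(x,y)\mapsto(c/x,\kappa(x)y)$ with $\kappa(x)^2=f(c/x)/f(x)=\gamma x^{-N}$, one computes $\alpha^2=(x,\gamma c^{-N/2}y)=(x,c^{d}y)$ uniformly in both cases (using $c^{2d}=1$). By the Corollary, $\Aut(\X_g)\iso\Z_2\o A_5$ for $g$ odd and $\iso SL_2(5)$ for $g$ even; by the order lemma of \cite{GS} together with the structure of these two central extensions, every involution of $A_5$ lifts to an element of order $2$ in the first case (so $\alpha^2=1$, i.e.\ $c^{d}=1$) and of order $4$ in the second (so $\alpha^2=z_0$, i.e.\ $c^{d}=-1$). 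Combined with $a_{d-1}^{d/2}=c^{d}a_1^{d/2}$, this gives $a_1^{d/2}=a_{d-1}^{d/2}$ for $g$ odd and $a_1^{d/2}=-a_{d-1}^{d/2}$ for $g$ even, which is precisely the identity isolated in the first paragraph. The main obstacle is the careful bookkeeping in the functional equation --- in particular handling the Weierstrass points at $0$ and $\infty$ and the extra factor of $c$ in $\gamma$ in the $SL_2(5)$ case, and checking that the coordinate realizing the normal form is compatible with a genuine $V_4\subset A_5$. Conceptually, however, the sign is forced by \emph{which} central extension occurs, so no explicit value of $c$ is ever needed; as a cross-check one could instead verify the identity directly from the explicit parametric equations of Section~3.3, but the argument through the lift has the advantage of being uniform in $g$.
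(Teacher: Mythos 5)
Your proof is correct, but it takes a genuinely different route from the paper's. The paper's proof of this theorem is essentially a citation: part i) is deferred wholesale to Theorem 3, i) of \cite{GS}, and part ii) is obtained by remarking that in this case the involutions of $\bAut(\X_g)$ lift to elements of order $4$ and then citing Theorem 3, ii) of \cite{GS}; no computation connecting the lift orders to the $u$-invariants appears in the paper itself. You reconstruct that content from scratch, and your steps check out: the reduction via $u_1=a_1^d+a_{d-1}^d$, $u_{d-1}=2a_1a_{d-1}$ and the identity $2^{\frac{d-2}{2}}u_1 \mp u_{d-1}^{d/2}=2^{\frac{d-2}{2}}\bigl(a_1^{d/2}\mp a_{d-1}^{d/2}\bigr)^2$ is algebraically exact (note $d$ is even in both cases, so $d/2\in\Z$); the functional equation $f(c/x)=\gamma x^{-N}f(x)$ with $(\gamma,N)=(1,2d)$, resp. $(c,2d+2)$, follows correctly from invariance of the Weierstrass locus under $\iota_2\colon x\mapsto c/x$ (including the points $0,\infty$ in the even case) and the normalization $F(0)=1$, giving $c^{2d}=1$ and $a_{d-1}=c^2a_1$; and the evaluation $\alpha^2=(x,c^dy)$ is right and is independent of the choice of lift, since the two lifts differ by the central $z_0$. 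Your final sign extraction uses the same discriminating principle the paper gestures at --- order-$2$ lifts in $\Z_2\o A_5$ versus order-$4$ lifts in $SL_2(5)$, the latter because $SL_2(5)$ has a unique involution --- but you turn it into a complete argument rather than a pointer to the literature. What the paper's approach buys is brevity; what yours buys is a self-contained proof whose mechanism is visible: the sign in the relation between $u_1$ and $u_{d-1}$ is exactly the value $c^d=\pm1$, which is forced by which central extension of $A_5$ occurs. Like the paper, you take the earlier Corollary (parity of $g$ determines $\Z_2\o A_5$ versus $SL_2(5)$) as given, which is legitimate since it precedes the theorem; granting that, your argument is complete.
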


\begin{proof}
i) This follows from Theorem 3, i) in \cite{GS}.

ii) The equation of $\X_g$ is given by $ y^2= x\ F(x^2)$ where $F(x^2)$ is a polynomial of degree $d= g$ in $x^2$.
Computing invariants is the same as in part i). In this case the involutions of $\Aut(\X_g)$ lift to elements of
order 4 in $\Aut (\X_g)$. From Theorem 3, ii) in \cite{GS} we have the equation of part ii). This completes the
proof. \qed
\end{proof}

Since the discriminant of the quadratic in Eq.~\eqref{quad} is zero (see Thm.~\ref{disc}) we have $a_g^{g+1} =
\frac {u_1} 2$. Hence, $[k(a_1, \dots , a_g):k(u_1, \dots , u_g)]=g+1$. Let $\Y_g$ be a hyperelliptic curve with
reduced automorphism group $A_5$ and equation $y^2= b_{g+1} x^{2g+2} +  b_g x^{2g}+ \dots + b_1 x^2 + b_0.$
Since $u_1, \dots , u_g$ are invariants under any coordinate change then $k (b_0, \dots , b_{g+1})$ is an
extension of $k(u_1, \dots , u_g)$. This curve $\Y_g$ can be normalized by means of the transformation $(x,y) \to
\left(x\cdot\sqrt[2g+2]{\frac{b_0}{b_{g+1}}},\ y\cdot\sqrt{b_0}\right)$, which gives
\begin{scriptsize}
\[y^2 = x^{2g+2} + \frac{b_g}{b_0}\left(\frac{b_0}{b_{g+1}}\right)^\frac{2g}{2g+2} x^{2g}
 + \frac{b_{g-1}}{b_0}\left(\frac{b_0}{b_{g+1}}\right)^\frac{2g-2}{2g+2} x^{2g-2}
 + \cdots + \frac{b_1}{b_0}\left(\frac{b_0}{b_{g+1}}\right)^\frac{2}{2g+2}
 x^2 +1. \]
\end{scriptsize}

Notice that $a_g^{g+1} \in k (b_0, \dots , b_{g+1})$. Since all the other $a_i$'s can be expressed in terms of
$a_g$ then $[ k (a_1, \dots , a_g) : k (b_0, \dots , b_{g+1})]=g+1$ we have that $k(u_1, \dots , u_g)= k (b_0,
\dots , b_{g+1})$.

The cover $\f : \X_g \to \bP^1$ has $\d + 3$ branch points. Let $S\setminus\{q_1,q_2,q_3\}=\{\l_1,\dots,\l_\d\}$.
Then, the isomorphism class of the corresponding curve is determined up to permutation of $\l_1, \dots , \l_\d$.
Invariants of this action are the symmetric polynomials in $\l_1, \dots , \l_\d$. Hence, $\Lambda(x)$ has
coefficients in terms of the elementary symmetric polynomials $s_1, \dots , s_\d$ of $\l_1, \dots , \l_\d$. Thus,
$k( b_0, \dots , b_{g+1}) \subset k(s_1, \dots , s_\d)$. Since $k(\L_g) \subset k( b_0, \dots , b_{g+1})$ then
there are at least $\d$-independent $b_i$'s. Therefore, $s_1, \dots , s_\d$ can be expressed in terms of $b_0,
\dots , b_{g+1}$. Thus, $k( b_0, \dots , b_{g+1}) = k(s_1, \dots , s_\d)$. Thus, we have the following:

\begin{lemma}\label{lemma6}$ k( u_1, \dots , u_d)=k(s_1, \ldots , s_\d)$.
\end{lemma}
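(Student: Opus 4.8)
The plan is to prove the lemma as the chain of equalities
\[
k(u_1,\dots,u_{d-1}) \;=\; k(b_0,\dots,b_{g+1}) \;=\; k(s_1,\dots,s_\d),
\]
organizing the computations already carried out in the paragraphs above; I treat $d=g+1$, the case $d=g$ being parallel. Since \cite{GS} gives $k(\L_g)=k(u_1,\dots,u_{d-1})$ and we know $\dim\L_g=\d$, every field in the chain has transcendence degree $\d$ over $k$, so in each link the easy direction is an inclusion and the real content is that it is an equality.

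For the left link I would run the degree computation inside $k(a_1,\dots,a_g)$. The linear relations among the dihedral invariants place $a_2,\dots,a_{g-1}$ in $k(u_1,\dots,u_g,a_1,a_g)$, the relation $u_g=2a_1a_g$ places $a_1$ in $k(u_1,\dots,u_g,a_g)$, and Theorem~\ref{disc}, forcing the discriminant of \eqref{quad} to vanish, yields $a_g^{g+1}=u_1/2$; hence $k(a_1,\dots,a_g)=k(u_1,\dots,u_g)(a_g)$ is a degree-$(g+1)$ extension obtained by adjoining a single $(g+1)$-st root. On the other side, invariance of the $u_i$ under the normalizing coordinate change gives $k(u_1,\dots,u_g)\subseteq k(b_0,\dots,b_{g+1})$, and the same normalization exhibits the passage from $b_0,\dots,b_{g+1}$ to $a_1,\dots,a_g$ as another degree-$(g+1)$ step governed by the identical root $\sqrt[g+1]{b_0/b_{g+1}}$. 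Matching the two relative degrees $g+1$ over the common field $k(a_1,\dots,a_g)$ forces the intermediate bases to coincide, so $k(u_1,\dots,u_g)=k(b_0,\dots,b_{g+1})$.

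For the right link the inclusion $k(b_0,\dots,b_{g+1})\subseteq k(s_1,\dots,s_\d)$ is the easy direction: $\Lambda(x)=\prod_{j}(\nf(x)-\l_j\df(x))$ has coefficients symmetric in $\l_1,\dots,\l_\d$, hence polynomial in $s_1,\dots,s_\d$, and after the normalizing scaling the $b_i$ become rational functions of the coefficients of $\Lambda$ and of $R,S,T$. The reverse inclusion is where the main difficulty lies, and I expect it to be the principal obstacle: equality of transcendence degrees does not by itself upgrade the inclusion to an equality (compare $k(t^2)\subsetneq k(t)$). To close this gap I would show that the assignment $(s_1,\dots,s_\d)\mapsto(b_0,\dots,b_{g+1})$ is birational onto its image. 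A general coefficient tuple determines $\X_g$ up to isomorphism; because the reduced group $A_5$ and the three distinguished branch points $q_1,q_2,q_3$ of the canonical cover $\phi$ are intrinsic to the curve, the isomorphism class recovers the unordered branch set $\{\l_1,\dots,\l_\d\}$, hence the symmetric functions $s_j$. This expresses each $s_j$ rationally in the $b_i$, giving $k(s_1,\dots,s_\d)\subseteq k(b_0,\dots,b_{g+1})$ and therefore equality. Chaining the two links yields the lemma.
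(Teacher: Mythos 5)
Your proposal is correct, and its skeleton is the paper's own: the same chain $k(u_1,\dots,u_{d-1})=k(b_0,\dots,b_{g+1})=k(s_1,\dots,s_\d)$, with the left link argued identically (solving the linear system for $a_2,\dots,a_{g-1}$, the relations $u_g=2a_1a_g$ and $a_g^{g+1}=u_1/2$ from Theorem~\ref{disc}, and matching the two degree-$(g+1)$ extensions inside $k(a_1,\dots,a_g)$). The genuine difference is the right link, and your instinct about where the difficulty sits is vindicated: the paper itself makes exactly the move you warn against, deducing from $k(\L_g)\subset k(b_0,\dots,b_{g+1})$ that there are at least $\d$ independent $b_i$'s and concluding that the $s_j$ ``can be expressed in terms of'' the $b_i$ --- a transcendence-degree count which, read literally, yields only that the $s_j$ are \emph{algebraic} over $k(b_0,\dots,b_{g+1})$; it is rescued only by the assertion in \S\ref{Hur} that the symmetrized Hurwitz space is birationally isomorphic to $\L_g$, which is stated there without proof. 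Your replacement --- generic injectivity of $(s_1,\dots,s_\d)\mapsto(b_0,\dots,b_{g+1})$ plus the characteristic-zero fact that a dominant, generically injective morphism is birational onto its image --- correctly closes this gap, and is in substance a proof of that Hurwitz-space assertion. It can even be simplified: there is no need to pass through isomorphism classes or the intrinsicness of $\Aut(\X_g)$, because the tuple $(b_0,\dots,b_{g+1})$ determines the polynomial $f(x)$, the roots of $f$ determine the Weierstrass points, and applying the fixed rational function $\phi$ (or $\phi_1$) to them recovers $\{\l_1,\dots,\l_\d\}$ and hence the $s_j$; that is the left inverse you need, with no moduli-theoretic input. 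One caveat your write-up shares with the paper rather than introduces: the left link treats $k(b_0,\dots,b_{g+1})$ as an intermediate field of $k(a_1,\dots,a_g)/k(u_1,\dots,u_g)$ and asserts, rather than proves, that $[k(a_1,\dots,a_g):k(b_0,\dots,b_{g+1})]$ equals $g+1$ rather than a proper divisor of it; this reading is only coherent when all these fields are interpreted as fields of functions on a parameter space dominating $\L_g$, since for independent indeterminates $b_i$ the inclusion $k(b_0,\dots,b_{g+1})\subset k(a_1,\dots,a_g)$ fails on transcendence-degree grounds.
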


From the computational point of view, to express $s_1, \dots , s_\d$ as rational functions in terms of $u_1, \dots
, u_d$ one can proceed as follows. A quick inspection shows that the coefficients of
\[\Lambda_i (x) = x^{60}+ a_{29}\ x^{58} + \cdots + a_1\, x^2 + 1,\]
which are linear polynomials in $\l$, satisfy $a_i \cdot \epsilon_3^i = a_{30-i}$, for $i=1, \ldots ,14$, where
$\epsilon_3$ is the primitive cubic root of unity with negative imaginary part.

For the polynomial
\[\Lambda(x)= \prod_{i=1}^\d \Lambda_i(x)=x^{60\d}+A_{30\d-1}x^{60\d-2}+\cdots+A_1x^2+1,\]
each coefficient is symmetric in $\l_1,\ldots,\l_\d$; moreover, each $A_i$ is a linear polynomial in
$s_1,\ldots,s_\d$. Also,
\[A_i \cdot \epsilon_3^i = A_{30\d-i},\quad i=1,\ldots,15\d-1.\]
Applying these relations to the dihedral invariants (starting with the last one) we obtain:
\begin{equation*}
\begin{split}
&u_{30\d-1}= A_1 \, A_{30\d-1} \, + \, A_{30\d-1} \, A_1 = 2 \, \epsilon_3 \, A_1^2, \\
&u_{30\d-2}= A_1^2 \, A_{30\d-2} \, + \, A_{30\d-1}^2 \, A_2 = 2\,\epsilon_3^2\,A_1^2\,A_2, \\
&\hspace{8.5em}\ldots \\
&u_{30\d-i}=  A_1^i \, A_{30\d-i} \, + \, A_{30\d-1}^i \, A_i = 2\,\epsilon_3^i\,A_1^i\,A_i. \\
\end{split}
\end{equation*}
Since, $u_i=a_1^{d-i} \, a_i \, + \, a_{d-1}^{d-i} \, a_{d-i}$ for  $1 \leq i \leq d-1,$ then combining the first
equation and the ones for even values of $i$, we obtain equalities
$A_i=\frac{u_{30\d-i}}{(\epsilon_3\,u_{30\d-1})^{i/2}}$ and as each $A_i$ is a linear polynomial in
$s_1,\ldots,s_\d$, this provides a linear system of equations, from which we can express each $s_j$ as a rational
function in $u_1,\ldots,u_{30\d-1}$.

\subsection{One-dimensional families}
%****************************************************************
Next we describe explicitly the 1-dimensional loci. We find the equations of such loci in terms of invariants
$i_1, i_2$. Further, we give a computational proof of the above lemma. As an example, we will compute dihedral
invariants in the case that $g \equiv 29 \pmod {30}$ and $\d=1$.

Let us denote the only parameter as $\lambda$. In this case, $g=29$ and $y^2=\Lambda(x)$ with $\Lambda$ defined
above. Then

\[u_1=2^{31}\cdot 5^{15}\frac{(11\,\lambda+32832)^{30}}{(\lambda-1728)^{30}},\quad u_{29} = \ 2^3
\cdot 5 \, \frac{(11\,\lambda+32832)^2}{(\lambda-1728)^2},\]
\[u_i=\frac{(\alpha_i\lambda+\beta_i)(11\,\lambda+32832)^{30-i}}{(\lambda-1728)^{31-i}},\quad i =
2,\ldots,28\,,\ \alpha_i,\beta_i\in\Z.\]
It is easily checked that $2^{14} u_1 - u_{29}^{15}\ = \ 0$, as expected from above.

Also, any generator of the field $k(u_1,\ldots,u_{29})$ is a rational function in $\lambda$ whose degree divides
those of $u_i$. As the degrees of $u_{28}$ and $u_{29}$ are 3 and 2 respectively, without loss of generality we
can choose any degree one rational function in $\lambda$, that is, $k(u_1,\ldots,u_{29})=k(\lambda)$ as expected.
The next lemma gives a computational proof of this result for all 1-dimensional loci. From the proof of the
following lemma we get an explicit expression on $\l$ in terms of $i_1, i_2$. Such expression will be used in the
next section.
\begin{lemma}\label{prop1}
For each of the 1-dimensional $\L_g $ we have $k(\L_g ) = k(\l)$.
\end{lemma}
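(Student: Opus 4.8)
The plan is to combine the identity $k(\L_g)=k(u_1,\dots,u_{d-1})$, established earlier following \cite{GS}, with an explicit degree count of the dihedral invariants viewed as rational functions of the single branch-point parameter $\l$. For a one-dimensional locus $\d=1$, so the only free branch point is $\l$ and the curve is parametrized by $\l$ alone: writing $y^2=f(x)$ with $f$ assembled from $\Lambda$ and the relevant factors among $\varphi,\chi,\psi$ (equivalently $R,S,T$ or $\bar R,\bar S,\bar T$), every coefficient is rational in $\l$. Because each $u_i=a_1^{d-i}a_i+a_{d-1}^{d-i}a_{d-i}$ is invariant under the normalizing scaling $x\mapsto x\cdot\sqrt[2g+2]{b_0/b_{g+1}}$, the radicals introduced by normalization cancel and $u_i\in k(\l)$. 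This gives at once the inclusion $k(\L_g)=k(u_1,\dots,u_{d-1})\subseteq k(\l)$, so the substance of the lemma is the reverse inclusion $\l\in k(\L_g)$.

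For the reverse inclusion I would argue exactly as in the $g\equiv 29$ computation carried out just above the statement, via L\"uroth. The field $k(\L_g)$ is an intermediate field of $k(\l)/k$, hence $k(\L_g)=k(h)$ for some nonconstant $h\in k(\l)$. Writing $u_i=R_i(h)$ with $R_i$ rational, and using that the degree of a composite rational map is the product of the degrees, one gets $\deg_\l u_i=(\deg R_i)(\deg_\l h)$, so $\deg_\l h$ divides every $\deg_\l u_i$ and therefore divides $\gcd_i \deg_\l u_i$. It thus suffices to exhibit, in each one-dimensional case, two invariants whose $\l$-degrees are coprime: this forces $\deg_\l h=1$, so $h$ is a Moebius function of $\l$ and $k(h)=k(\l)$.

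The remaining step is the explicit production of a coprime pair of degrees in each of the eight cases $g=29,35,39,44,45,50,54,60$. For $g\equiv 29$ this is already done: $u_{28}$ and $u_{29}$ have $\l$-degrees $3$ and $2$, whence $\gcd=1$ and $k(\L_{29})=k(\l)$. For the other seven values of $g$ one forms the appropriate product of $\Lambda$ with the relevant factors, normalizes to $y^2=F(x^2)$ or $y^2=x\,F(x^2)$, reads off the $a_i$, assembles the $u_i$, and records the $\l$-degrees of two low-order invariants; in each case a coprime pair appears and the L\"uroth degree argument closes the proof uniformly. Inverting the resulting relation also yields the explicit expression of $\l$ in terms of $i_1,i_2$ promised for the next section.

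The main obstacle is the size of the bookkeeping rather than any conceptual difficulty: the polynomial $\Lambda$ and its products grow rapidly with the case, and one must track the normalizing radicals carefully to confirm that the $u_i$ emerge as honest rational functions of $\l$ with the stated degrees. Once a coprime-degree pair has been identified in each of the eight cases, no further work is required.
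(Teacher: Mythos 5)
Your proof is correct, but it takes a genuinely different route from the paper's official proof of Lemma~\ref{prop1} --- in fact it globalizes the argument the paper only sketches as a worked example for $g \equiv 29 \pmod{30}$ immediately before the lemma. There the paper notes that any L\"uroth generator $h$ of $k(u_1,\dots,u_{29}) \subseteq k(\l)$ has $\deg h$ dividing every $\deg_\l u_i$, and $\deg_\l u_{28}=3$, $\deg_\l u_{29}=2$ force $\deg h = 1$; you propose to run this same gcd-of-degrees argument in all eight one-dimensional cases ($g = 29, 35, 39, 44, 45, 50, 54, 60$, correctly read off from Table 1). The paper's actual proof instead computes the absolute invariants $i_1, i_2$ as explicit rational functions of $\l$ (of degrees $4$ and $6$) and then \emph{computes} a L\"uroth generator of $k(i_1,i_2)$ via the rational-decomposition algorithm of \cite{Gu2}, verifying it is a degree-one function of $\l$; note the gcd trick would not suffice for that pair, since $\gcd(4,6)=2$, so the generator genuinely must be computed, whereas your route is degree arithmetic only --- but it hinges on a coprime-degree pair of $u_i$'s actually appearing in each of the seven remaining cases, a computational claim you assert rather than verify (the paper's algorithmic method is the natural fallback if some case failed). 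Your setup is otherwise sound: the inclusion $k(\L_g)=k(u_1,\dots,u_{d-1})\subseteq k(\l)$ is justified, and you are right that the normalizing radicals cancel in the $u_i$ (indeed the paper records $k(u_1,\dots,u_g)=k(b_0,\dots,b_{g+1})$). The one inaccuracy is your closing remark: your argument yields $\l$ as a rational function of the dihedral invariants $u_i$, not of $i_1, i_2$, so the explicit expression of $\l$ in terms of $i_1, i_2$ promised for Section 5 (where $\l \in M(\p)$ is deduced for nonsingular moduli points $(i_1,i_2)$) does not follow from your proof without an additional computation showing $k(i_1,i_2)=k(\l)$ --- which is precisely the step the paper's version of the proof supplies.
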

\begin{proof}
The invariants are
\begin{tiny}
\begin{equation*}
\begin{split}
i_1 & \ = \ \frac{1948908}{7397845567} \frac{(-15159961555740000-610337874000\lambda+791091587\lambda^2)^2}{(11586093746490000+872196589\lambda^2-931385301000\lambda)^2}, \\
i_2 & =  \frac{4947228\ (79290599\lambda-42335695500)^2(-15159961555740000-610337874000\lambda+791091587\lambda^2)^2}{1083437009726901515\ (11586093746490000+872196589\lambda^2-931385301000\lambda)^3}. \\
\end{split}
\end{equation*}
\end{tiny}

By L\"uroth's Theorem there is a rational function in $\l$ that generates $k(i_1,i_2)$. By computing this
generator (see for example \cite{Gu2}) it is proved that $\lambda$ is a generator of $k(i_1,i_2)$. The other cases
are proved in the same way. \qed
\end{proof}

By eliminating $\l$ we can explicitly find the curve
\begin{equation}\label{eq_1_dim}
F(i_1, i_2)=0
\end{equation}
for each of the eight cases. For example, the equation of the curve in the first case is given by
\begin{tiny}
\begin{equation*}
\begin{split}
20104543529222176607891970551365425625i_2^4-6001516794980613854767134781868434500i_2^3 i_1 \\
+671664430878843510918689481392772150 i_2^2 i_1^2-467825523547842914848108169841758572200i_1^3i_2^2\\
-33400604375309785622232551775685380 i_1^3 i_2+69851513504555488123050532974625671120 i_1^4 i_2\\
+622702796403678565883409475309881 i_1^4-2609325640118276782171286285338389288 i_1^5\\
+2733479091269756882118693138958399101456 i_1^6 &=0
\end{split}
\end{equation*}
\end{tiny}
In each case this is a genus 0 curve with degrees 6 and 4 in $i_1$ and $i_2$ respectively, hence these curves have
singular points. In each case there are exactly three singular points and for each singular point $(i_1, i_2)$
there are two corresponding values of $\l$.

We determine these points explicitly and present the quadratic equations in $\l$ whose roots determine those
points. For each case the second and third singular point corresponds to $(i_1, i_2)=(0,0)$ and the point at
infinity given by $I_2=0$.

\begin{table}[ht!] \label{table2}
      \caption{Singular points of 1-dimensional loci}
%
%      \vskip 0.2cm
\begin{center}
\begin{tiny}
\begin{tabular}{||c|c||}
      \hline
      \hline
$\#$ & Values of $\l$   \\
      \hline
      \hline
 & $452144735218242469277017\l^2-482828029389149632341153000\l-8593063274412012696185238840000$   \\
1 & $791091587\l^2-610337874000\l-15159961555740000$   \\
 & $872196589\l^2-931385301000\l+11586093746490000$  \\
      \hline
 & $45116739209875087720855199586628\l^2-43556541042223826596073807880918300\l$   \\
 & $-138531873472176494963183499855707291875$   \\
2 & $1651853764\l^2-1226334498300\l-5257525430501875$   \\
 & $5700085544\l^2-5799389184300\l+29819258427080625$   \\
      \hline
 & $25920118616911092183126784613617142\l^2-44031520362372236593738424989592507950\l$   \\
 & $-380728179705646173243900805566261020741875$   \\
3 & $1123023677098\l^2-1632357832704050\l-16735382221690758125$   \\
 & $11130104653\l^2-19337115814550\l+144111607427085625$   \\
      \hline
 & $1190289762560291723133371786217787\l^2-2109180129399825416728336502192357000\l$   \\
 & $-102603051826076134426802508773908422000000$   \\
4 & $31930385620603\l^2-46875776542808000\l-2760999374275855500000$   \\
 & $6105542623\l^2-10818953153000\l+230260862893387500$   \\
      \hline
 & $6333690499915638419332937497733\l^2-2171594295704055460635952732129500\l$   \\
 & $-388162393043218880265390321313068639375$   \\
5 & $2357171794013\l^2-531948616761375\l-144507437760700783125$   \\
 & $1639203229\l^2-562023733500\l+54739184825587500$   \\
      \hline
 & $6548345875574794801166675435529962539362\l^2-2054584724746639344314578064291669769060100\l$   \\
 & $-66477004559491595548969707908580013974645529375$   \\
6 & $17638004386446978\l^2-3958284234892272700\l-179314085452120858954375$   \\
 & $29883184652\l^2-9770219914300\l+286336970555605625$  \\
      \hline
 & $5197143015623358421052917257787762\l^2-4064516960859449385634468871138790750\l$   \\
 & $-1973346971902336126577309580519879740484375$   \\
7 & $167241649141649\l^2-87194298622737750\l-63518438366227732921875$  \\
 & $7883609626\l^2-6165515349750\l+932376249595828125$   \\
      \hline
 & $50049608492559474153964972804988742465553\l^2-36990019047097223907490687861759806337326200\l$   \\
 & $-2950939252953188574421512956195380140260947773125$   \\
8 & $14303777741547512\l^2-7805526971818231735\l-844416597642584618857750$   \\
 & $4237002269\l^2-3224689016170\l+134559773845245500$   \\
      \hline
      \hline
\end{tabular}
\end{tiny}
\end{center}
\end{table}

\section{Rational models over the field of moduli}
%**********************************************************************************
In this section we study the field of moduli of hyperelliptic curves with reduced automorphism group $A_5$. Let
$\X$ be a curve defined over $\C$. A field $F \subset \C$ is called a \emph{field of definition} of $\X$ if there
exists $\X'$ defined over $F$ such that $\X'$ is isomorphic to $\X$ over $\C$.

The \emph{field of moduli} of $\X$ is a subfield $F \subset \C$ such that for every automorphism $\sigma \in \Aut
(\C)$ the following holds: $\X$ is isomorphic to $\X^\sigma$ if and only if $\, \, \sigma_F = id$. We will use
$\p=[\X]\in \M_g$ to denote the corresponding \emph{moduli point} and $\M_g (\p)$ the residue field of $\p$ in
$\M_g$. The field of moduli of $\X$ coincides with the residue field $\M_g (\p)$ of the point $\p$ in $\M_g$. The
notation $\M_g (\p)$ (resp., $M(\X)$) will be used to denote the field of moduli of $\p \in \M_g$ (resp., $\X$).
If there is a curve $\X^\prime$ isomorphic to $\X$ and defined over $M(\X)$, we say that $\X$ has a \emph{rational
model over its field of moduli}. As mentioned above, the field of moduli of curves is not necessarily a field of
definition.

Let $\X_g$ be a genus $g$ hyperelliptic curve with reduced automorphism group isomorphic to $A_5$. Then its field
of moduli is a field of definition. Next, we give a rational model of the curve over the field of moduli.

\begin{theorem}
Let $\p \in \H_g$ such that $\bAut (\p) \iso A_5$ and $u_1, \dots , u_s$, $s=g$ or $ g-1$, the corresponding
dihedral invariants. Then, there is a rational model over the
field of moduli  $M(\p)$  given in the form $y^2=F(x)$ or $y^2= x\,F(x)$  where:\\

a) $F(x)$ can be chosen to be decomposable polynomial in $x^2$ as below \\

\qquad i) if $\Aut (\p) \iso \Z_2 \o A_5$ then
            \begin{equation*} y^2=u_1 x^{2g+2} + u_1 x^{2g} + u_2 x^{2g-2} + u_3 x^{2g-4}+ \dots + u_g x^2+2;
            \end{equation*}

\qquad ii) if $\Aut (\p) \iso SL_2(5)$ then
            \begin{equation*}
            y^2= x( u_1 x^{2g} + u_1 x^{2g-2} + u_2 x^{2g-4} + \dots + u_{g-1} x^2 +2);
            \end{equation*}

b) $F(x)$ can be chosen to be decomposable in $x^5$ as in Eq.~\eqref{norm_eq}.
\end{theorem}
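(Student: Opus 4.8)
The plan is to prove both parts by writing down explicit equations whose coefficients manifestly lie in the field of moduli $M(\p)$, and then certifying that each equation defines a curve isomorphic to $\X_g$. The common starting point is that the dihedral invariants generate the function field $k(\L_g)=k(u_1,\dots,u_{d-1})$ and are invariants of the isomorphism class; hence their values at $\p$ lie in, and in fact generate, the residue field $M(\p)=\M_g(\p)$. Since (as recalled just before the statement, via \cite{Sh5} and \cite{GS}) $M(\p)$ is already a field of definition, it suffices to exhibit one model whose coefficients belong to $k(u_1,\dots,u_{d-1})$ and to check that it is the correct curve.

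For part (a)(i), with $\Aut(\p)\iso\Z_2\o A_5$ and $g$ odd, I would begin from the normal equation $y^2=F(x^2)$, where $F(t)=t^{g+1}+a_g t^{g}+\cdots+a_1 t+1$, and apply the coordinate change $(x,y)\mapsto(\mu x,\,y/\sqrt 2)$ with $\mu^2=a_g$. The constant term becomes $2$, while the leading and next-to-leading coefficients become $2\mu^{2g+2}=2a_g^{g+1}$ and $2a_g\mu^{2g}=2a_g^{g+1}$, both equal to $u_1$ because the vanishing of the discriminant of Eq.~\eqref{quad} (Theorem~\ref{disc}(i)) gives $a_g^{g+1}=u_1/2$. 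The intermediate coefficient of $x^{2m}$ transforms into $2a_m a_g^{m}$, and I would identify this with $u_{g+1-m}=a_1^{m}a_{g+1-m}+a_g^{m}a_m$ using the palindromic symmetry $a_m=a_{g+1-m}$ of the normal form (together with $a_1=a_g$) forced by the $A_5$-action. This produces exactly the claimed equation, now with all coefficients in $M(\p)$. Case (a)(ii), with $\Aut(\p)\iso SL_2(5)$ and $g$ even, proceeds identically starting from $y^2=xF(x^2)$ with $\deg F=g$ and invoking Theorem~\ref{disc}(ii); the stray factor $x$ is absorbed by the scaling of $y$.

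For part (b) I would use the family parametrization established above. Each $\Lambda_i(x)$ is a degree-$60$ polynomial in $x^5$ whose coefficients are affine-linear in $\l_i$, while $R,S,T$ (i.e.\ $\varphi,\chi,\psi$) are, up to a factor of $x$, polynomials in $x^5$. Consequently every $f(x)$ listed in Eq.~\eqref{norm_eq} is of the form $F(x^5)$ or $x\,F(x^5)$. The coefficients of $\Lambda(x)=\prod_{i=1}^{\d}\Lambda_i(x)$ are symmetric in $\l_1,\dots,\l_\d$, hence polynomials in the elementary symmetric functions $s_1,\dots,s_\d$; by Lemma~\ref{lemma6} one has $k(s_1,\dots,s_\d)=k(u_1,\dots,u_d)=M(\p)$, so the entire $x^5$-decomposable model is defined over $M(\p)$.

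The hard part will be the coefficient identity in part (a): the verification that $2a_m a_g^{m}=u_{g+1-m}$ for every $m$, which is exactly where the fine structure of the $A_5$-normal form must be used. I would secure the required symmetry $a_m=a_{g+1-m}$ either by exhibiting the involution of $A_5$ acting as $x\mapsto 1/x$ in the normalizing coordinate, or, lacking a clean conceptual argument, by reading it directly off the explicit polynomials $R,S,T$ and $\Lambda_i$ of Section~2 (the genus enters only through $\d$, so only finitely many residues modulo $30$ need checking). A secondary point to monitor is that $\mu$ itself need not lie in $M(\p)$; what matters is that the resulting coefficients $2a_m a_g^{m}=u_{g+1-m}$ do, so the scaling introduces no obstruction over the field of moduli.
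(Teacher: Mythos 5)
Your proposal has the same skeleton as the paper's own proof. For part (a) you use the normal equation, the vanishing of the discriminant of Eq.~\eqref{quad} (Theorem~\ref{disc}) to get $a_g^{g+1}=u_1/2$, the rescaling $x\mapsto\sqrt{a_g}\,x$ (with $y$ scaled so the constant term becomes $2$), and the reduction of the whole statement to the single identity $2a_ma_g^m=u_{g+1-m}$, equivalently $a_ma_g^m=a_{g+1-m}a_1^m$; for part (b), exactly as in the paper, Lemma~\ref{lemma6} transports the coefficients of the model in Eq.~\eqref{norm_eq}, which are symmetric functions of $\l_1,\dots,\l_\d$, into the field generated by the dihedral invariants, hence into $M(\p)$. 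The one genuine divergence is the justification of the key identity. The paper claims it follows ``by solving the linear system in Eq.~\eqref{system2x2}''; you derive it instead from a symmetry of the normal form forced by the $A_5$-action. Your route is arguably the more solid one: the determinant of that $2\times 2$ system is $a_1^{g+1}-a_g^{g+1}$, which vanishes precisely because of the zero-discriminant condition being invoked ($a_1^{g+1}=a_g^{g+1}=u_1/2$), so the system is singular, its two equations are dependent, and solving it cannot by itself produce the identity; some input from the rest of the group beyond the involution $x\mapsto -x$ is needed, and your second involution is exactly that input.

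However, your symmetry claim needs a correction, at the very spot you flagged as the hard part. Exact palindromy $a_m=a_{g+1-m}$, $a_1=a_g$ is false in general, and the second involution cannot always be taken to be literally $x\mapsto 1/x$ in a normal-form coordinate: the centralizer of $x\mapsto -x$ in $A_5$ supplies an involution $x\mapsto c/x$, and closure of the Weierstrass points under it, together with the normalization (leading and constant coefficients both $1$), forces only $c^{2(g+1)}=1$, not $c^2=1$ (for instance, when the fixed points $\pm 1$ are Weierstrass points the monic polynomial through them has constant term $-1$, and renormalizing introduces a nontrivial twist). What is true, in every normal-form coordinate, is the twisted palindromy: there is $\zeta$ with $\zeta^{g+1}=1$ such that $a_i=a_{g+1-i}\,\zeta^{\,g+1-i}$ for all $i$. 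This weaker statement suffices, because the twist cancels in the products you need: from it $a_{g+1-m}=a_m\zeta^m$ and $a_1=a_g\zeta^{-1}$, hence $a_{g+1-m}a_1^m=(a_m\zeta^m)(a_g\zeta^{-1})^m=a_ma_g^m$, which is exactly the identity $u_{g+1-m}=2a_ma_g^m$. With that repair part (a)(i) closes, the same argument with $d=g$ handles (a)(ii), and your part (b) goes through as written.
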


\begin{proof}
a) Let $[\X_g]=\p \in \H_g$ such that $\bAut (\p) \iso A_5$. If $g$ is odd then dihedral invariants are $u_1,
\dots , u_g$, otherwise they are $u_1, \dots , u_{g-1}$.

i) Let $\Aut (\p) \iso \Z_2 \o A_5$. Then $\X_g$ has normal equation
\[y^2 = x^{2g+2} + a_g x^{2g}+\dots + a_1 x^2 + 1.\]
From the definition of the invariants we have
\begin{equation}\label{system2x2}
\begin{split}
& u_i \ = \  a_1^{g+1-i} a_i + a_g^{g+1-i} a_{g+1-i}, \\
& u_{g+1-i}\ = \ a_1^i a_{g+1-i} + a_g^i a_i,
\end{split}
\end{equation}
for each $2 \leq i \leq g-1$.
For $u_1, u_g$ we have the equation
\begin{equation*}
2^{g+1}\, a_g^{2g+2} - 2^{g+1}\, u_1 \, a_g^{g+1} + u_g^{g+1}=0
\end{equation*}
which is a quadratic polynomial in $a_g^{g+1}$. Since the discriminant of this quadratic is zero (see Theorem
\ref{disc}) we obtain $a_g^{g+1} = u_1/2$. By the transformation $x \to \sqrt{a_g} \ x$, the curve is isomorphic
to a curve with equation
\[y^2 = \frac {u_1} 2 x^{2g+2} + \frac{u_1}2 x^{2g}+ a_{g-1} a_g^{g-1} x^{2g-2}+ \dots + a_1 a_g x^2 +1.\]

Now, it suffices to show that for $2 \leq i \leq g-1$ we have $a_i a_g^i = u_{g+1-i}/2$ which is equivalent  to
$a_i a_g^i = a_{g+1-i} a_1^i$ (by the definition of $u_{g+1-i}$).
This equality can easily be proven by solving the linear system in Eq.~\eqref{system2x2} for $a_i$ and $a_{g+1-i}$
and using these values in the previous equation.

ii) Let $\Aut (\p) \iso SL_2(5)$. Then $\X_g$ has normal equation
\[y^2 = x \left( x^{2g} + a_{g-1}x^{2g-2}+ \dots + a_1 x^2 + 1 \right).\]
The proof is similar to the above by replacing $g$ with ${g-1}$. The transformation $x \to \sqrt{a_{g-1}}$ fixes 0
and $\infty$ and the result follows.

Since the dihedral invariants are in the field of moduli $M(\p)$ it is enough to show that $\X_g$ is isomorphic to
a curve $C$ whose coefficients are in terms of such invariants.

Part b) follows from part a) and Lemma~\ref{lemma6}. \qed
\end{proof}

\begin{remark}
Similar  equations to the equations in Eq.~\eqref{norm_eq} are given also in \cite{BCGG}, as kindly pointed out by
the referee. However, there is no discussion in that paper of the rational model of the curves over the field of
moduli. The point of the above theorem is that such equations are defined over the field of moduli.
\end{remark}

As it has already been stated, the expressions in parts a) and b) of the previous Theorem define the same $\p$. We
will explicitly show this by giving the corresponding isomorphism. In Subsection \ref{subsect22} it was determined
that the inverse of the transformation $\sigma: \,  x \to \frac{ix+1}{-ix+1}$ transforms $\phi$ into $\phi_1$.
Therefore, the isomorphism given by  $(x,y) \to \left(\frac{x-1}{ix+i},\frac{y}{(ix+i)^{g+1}}\right)$,  transforms
the equation of part b) into an equation in $x^2$:
\[y^2= b_{g+1} x^{2g+2} +  b_g x^{2g}+ \dots + b_1 x^2 + b_0.\]
This can be normalized by means of the isomorphism
\[(x,y) \to \left(x\cdot\sqrt[2g+2]{\frac{b_0}{b_{g+1}}},\ y\cdot\sqrt{b_0}\right)\]
which gives
\begin{scriptsize}
\[y^2 = x^{2g+2} + \frac{b_g}{b_0}\left(\frac{b_0}{b_{g+1}}\right)^\frac{2g}{2g+2} x^{2g}
 + \frac{b_{g-1}}{b_0}\left(\frac{b_0}{b_{g+1}}\right)^\frac{2g-2}{2g+2} x^{2g-2}
 + \cdots + \frac{b_1}{b_0}\left(\frac{b_0}{b_{g+1}}\right)^\frac{2}{2g+2}
 x^2 +1.\]
\end{scriptsize}
As in the proof of the previous Theorem, it is enough to compose this with $(x,y) \to \left(
x\cdot\sqrt{\frac{b_g}{b_0}}\left(\frac{b_0}{b_{g+1}}\right)^\frac{g}{2g+2}, \frac{y}{\sqrt{2}} \right)$ to obtain
the rational model as a polynomial in $x^2$.

%****************************************************

\subsection{Computing the rational model}
%%%%%%%%%%%%%%%%%%%%%%%%%%%%%%%%%%%%%%%%%%%%%%%%%%%%%%

\begin{table}\label{table3}
      \caption{Field of definition for the singular points}
      \begin{center}
\begin{scriptsize}
      \begin{tabular}{||c|c||}
      \hline
      \hline
$\#$ & $d$ such that $M (\p)= k(\sqrt{d})$  \\
      \hline
      \hline
 & 6594752841114090745134757 \\
1 & 127067509222 \\
 & -\,27468005002203037701 \\
      \hline
 & 741854166910125814698682452912604588627104323162904099673 \\
2 & 120950912295937 \\
 & -\,1318890572777620357905 \\
      \hline
 & 3877164163606363023773232119905718360665213621866915565002867565515 \\
3 & 7287697079146593051915 \\
 & -\,67133167127519339801955 \\
      \hline
 & 287030471019588726034132917522068933305 \\
4 & 931923194601696118570 \\
 & -\,550642030389053730301265 \\
      \hline
 & 77622682424472206764752607551443431 \\
5 & 13982754260355689869 \\
 & -\,3984430259985622758510 \\
      \hline
 & 1675692149588701583556012593441556134169533164932596422419308648533776178 \\
6 & 12160207490958418193300962 \\
 & -\,3413118505805601540291498 \\
      \hline
 & 3266268236007269922068112767862912834922718391 \\
7 & 1589814414379364704593946359 \\
 & -\,52202574090563189329673 \\
      \hline
 & 217064892339276374896058217864319147819031514610535561445877911458269709445733 \\
8 & 1773286019674481300663325709801 \\
 & -\,425427118896332660731 \\
      \hline
      \hline
\end{tabular}
\end{scriptsize}
\end{center}
\end{table}

We continue our discussion of 1-dimensional families from section 4. It is clear from Lemma~\ref{prop1} that
$\l\in k(i_1, i_2) $ is a rational function in terms of $i_1$ and $i_2$. Thus, for every nonsingular moduli point
$\p =(i_1, i_2)$ we get $\l \in M(\p)$. Hence, the equation of the hyperelliptic curve as in Eq.~\eqref{eq_1_dim}
is a rational model over the field of moduli.

However, on the singular points of the curve $F(i_1, i_2)=0$ direct computation for $\l$ is needed. In all the
cases the singular points have rational coordinates in the curve $F(i_1, i_2)=0$. However, this is not sufficient
for the moduli point to be a rational point. For each point, let $k (\sqrt{d}) $ denote the quadratic extension
determined by the corresponding polynomial of Table 2. From the corresponding values of $\l$ we compute the $i_3$
invariant. In all the cases this is not $k$-rational and $i_3 \in k (\sqrt{d})$. Hence, the field of moduli
contains $k (\sqrt{d})$. Since the curve has equation given in Eq.~\eqref{norm_eq} then $k (\sqrt{d}) $ is a field
of definition. Hence, $k (\sqrt{d}) $ is the field of moduli and Eq.~\eqref{norm_eq} provides a rational model
over this field. These computations are summarized in Table 3, where $d$ is determined for all singular points.

\section*{Acknowledgements}
Most of this paper was written during a visit of the first author at the University of Idaho.  The first author
wants to thank T. Shaska for his invitation. Further, he wants to thank the Spanish Ministry of Education for the
grant support in Project BFM2001-1294. Both authors want to thank the anonymous referees for helpful comments.

%***************************************************************

\end{document}